\newtheorem{satz}{Theorem}
\newtheorem{proposition}[satz]{Proposition}
\newtheorem{theorem}[satz]{Theorem}
\newtheorem{lemma}[satz]{Lemma}
\newtheorem{definition}[satz]{Definition}
\newtheorem{corollary}[satz]{Corollary}
\newtheorem{remark}[satz]{Remark}
\def\T{\mathsf{T}}
\def\Z{\mathbb {Z}}
\def\F{\mathbb {F}}
\def\E{\mathsf{E}}
\def\a{\alpha}
\def\C{\mathbb{C}}
\def\d{\delta}
\def\o{\omega}
\def\({\big (}
\def\){\big )}
\def\G{\Gamma}
\def\dim{{\rm dim}}
\def\le{\leqslant}
\def\ge{\geqslant}
\def\_phi{\varphi}
\def\eps{\varepsilon}
\def\Gr{{\mathbf G}}
\def\FF{\widehat}
\def\ov{\overline}
\def\D{\Delta}
\def\Bohr{{\rm Bohr}}
\def\Stab{{\rm Stab}}
\def\tr{\mathrm{tr}}
\def\Bohr{{\rm Bohr}}
\author{Shkredov I.D.}
\title{On multiplicative energy of subsets of varieties
	%and  applications
	\footnote{This work is supported by the Russian Science Foundation under grant 19--11--00001.}
	%\newline
	%{\bf Keywords} : Gowers norms, linear equations.
	%\newline
	%MSC 2000 : 11B75, 11B99.}
}
\date{}
\begin{document}
\maketitle

\begin{center}
	Annotation.
\end{center}

{\it \small
	We obtain a non--trivial upper bound for the multiplicative energy of any sufficiently large subset of a subvariety of a finite algebraic group.      
	We also find some applications of our results to growth of conjugates classes, estimates of exponential sums and the restriction phenomenon.
}
\\

\section{Introduction}

In papers \cite{BGS}, \cite{BGT},  \cite{Gow_random}, \cite{H}, \cite{H_ideas},  \cite{LP}, \cite{LS1}, \cite{LSS} and in many others authors study growth properties of   rather general subsets $A$ of different groups  $\Gr$ (basically, of Lie type). 
One of the difficulties concerning growth of $A$ is that, in principle, $A$ can live in a subvariety of $\Gr$, see   \cite{H}, \cite{H_ideas},  \cite{LP}. 
%step of the proof
In this article we restrict ourselves to the case when $A$ indeed belongs to a subvariety and consider the most natural combinatorial problem  connecting growth of $A$, namely, the basic question about obtaining upper bounds for  the {\it multiplicative energy} (see, e.g., \cite{TV}) of $A$ 
$$
	\E(A) := |\{ (a,b,c,d) \in A^4 ~:~ ab^{-1} = c d^{-1} \}| \,. 
$$
Our result is the following 

\begin{theorem}
	Let  $\Gr$ be  a finite  algebraic group over $\F_q$,  $V\subseteq \Gr$ be a  variety and $\G$ be a maximal algebraic subgroup such that a coset of $\G$ is contained in $V$.
	% $d=\dim(V)$, $D=\deg (V)$. 
	Then for any $A\subseteq V$, $|A|\ge |\G|^{1+\eps}$
	%, an arbitrary non--trivial irreducible representation $\rho$ 
	and all sufficiently large $q$ 
	one has 
\begin{equation}\label{f:A_op_intr}
%	\| \FF{A} (\rho) \|_{o} \ll |A|^{1-\delta} \,,
	\E(A) \ll |A|^{3-\delta} \,,
\end{equation}
	where $\d = \d(\eps, \dim (V)) >0$ and the implied constant in \eqref{f:A_op_intr} depends on $\dim(V), \deg(V)$,  
%	and dimension of $\Gr$.\\ 
	$\dim(\Gr), \deg(\Gr)$ and dimension $n$ of $\F_q^n \supseteq \Gr$.\\ 
%	$\d=\d(\dim (V))>0$.\\
	In particular, bound \eqref{f:A_op_intr} takes place for a variety $V$ iff $V$ does not contain a coset of an algebraic subgroup of size $\Omega (|V|)$.  
\label{t:A_op_intr}
\end{theorem}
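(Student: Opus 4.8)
\medskip
\noindent\textbf{Proof plan.} The plan is to argue by contradiction: if the multiplicative energy is too large, then inside $V$ it must come from a genuine coset of an algebraic subgroup of size $\gg|A|^{1-o(1)}$, contradicting the maximality of $\G$ together with $|A|\ge|\G|^{1+\eps}$. Write $\E(A)=\sum_x|A\cap xA|^2$ and suppose $\E(A)\ge|A|^{3-\d}$, with $\d=\d(\eps,\dim(V))>0$ to be fixed at the end; one may assume $|A|$ is larger than any prescribed constant depending on the data, since otherwise \eqref{f:A_op_intr} is trivial. Put $K:=|A|^3/\E(A)\le|A|^{\d}$.

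First I would pass from large energy to a small-doubling subset via the non-commutative Balog--Szemer\'edi--Gowers theorem (Tao): there is $A'\subseteq A$ with $|A'|\gg K^{-O(1)}|A|\ge|A|^{1-O(\d)}$ and $|A'A'^{-1}|\ll K^{O(1)}|A'|\le|A|^{O(\d)}|A'|$. By the non-abelian Pl\"unnecke--Ruzsa inequalities, every product of boundedly many copies of $A'$ and $A'^{-1}$ has size $\le|A|^{O(\d)}|A'|$; thus $S:=A'A'^{-1}$ is a symmetric $|A|^{O(\d)}$-approximate subgroup of $\Gr(\F_q)$ containing the identity.

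Next I would invoke the classification of approximate subgroups of $\Gr(\F_q)$, valid for $\Gr$ of bounded complexity and $q$ large (Hrushovski; Breuillard--Green--Tao; Pyber--Szab\'o for the semisimple part): either $|A'|\le|A|^{O(\d)}$, which is impossible for $|A|$ large and $\d$ small, or there is an algebraic subgroup $H\le\Gr$ (defined over $\F_q$ up to a bounded base change) such that $S$, and hence $A'$, is covered by $\le|A|^{O(\d)}$ cosets of $H(\F_q)$, with $|A|^{1-O(\d)}\le|H(\F_q)|\le|A|^{1+O(\d)}$. By pigeonhole some coset $a_0H(\F_q)$, $a_0\in A'$, meets $A'$ in at least $|A|^{1-O(\d)}$ points; translating by $a_0^{-1}$ and passing to the identity component $H^\circ$ gives a coset $C=hH^\circ(\F_q)$ with $|(a_0^{-1}V)\cap C|\ge|A|^{1-O(\d)}$. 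Now $(a_0^{-1}V)\cap C$ is the set of $\F_q$-points of a closed subvariety $W$ of the coset $hH^\circ$, which is geometrically irreducible of dimension $\ell:=\dim(H^\circ)$ with $|C|\asymp q^{\ell}$. Since $|A|\le|V|\ll q^{\dim(V)}$, for $q$ large and $\d<1/(c_0\dim(V))$ one has $|W(\F_q)|\ge|A|^{1-O(\d)}>C_1q^{\ell-1}$, so Lang--Weil forces $\dim(W)=\ell$ and thus $W=C$. Therefore $V$ contains the coset $a_0hH^\circ(\F_q)$ of the connected algebraic subgroup $H^\circ$, and $|H^\circ(\F_q)|\gg|A|^{1-O(\d)}$.

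By the maximality of $\G$ this gives $|\G|\ge|H^\circ(\F_q)|\gg|A|^{1-O(\d)}\ge|\G|^{(1+\eps)(1-O(\d))}$; choosing $\d$ so small that $(1+\eps)(1-O(\d))>1$ (a threshold depending on $\eps$ and, through the implied constants, on $\dim(V)$) forces $|\G|$, hence $|A|$, to be bounded --- contradicting the standing assumption, and proving \eqref{f:A_op_intr}. The concluding "iff" is then a reformulation: an admissible set $A\subseteq V$ with $|A|\ge|\G|^{1+\eps}$ exists exactly when $|\G|^{1+\eps}\le|V|$, i.e.\ when $|\G|=o(|V|)$, i.e.\ when $V$ contains no coset of an algebraic subgroup of size $\Omega(|V|)$; and if $V$ does contain such a coset $gH$, then $A=gH$ has $\E(A)=|A|^3$, so \eqref{f:A_op_intr} cannot hold for it. The step I expect to be the main obstacle is the structural one: one needs the approximate-subgroup theorem in a form that handles an arbitrary finite algebraic group, produces a genuine algebraic subgroup $H$ with $|H(\F_q)|$ within a factor $|A|^{O(\d)}$ of $|A'|$, and is uniform in $q$ with all constants controlled by $\dim(\Gr),\deg(\Gr),n$ and the doubling parameter; calibrating the Lang--Weil error term against this density is exactly what fixes $\d$ as a function of $\dim(V)$. (A more hands-on alternative --- bounding $\E(A)=\sum_g|A\cap Ag|^2$ using $|A\cap Ag|\le|V\cap Vg|$ and the fact that $V\cap Vg$ drops dimension unless $g$ stabilises a component of $V$ --- also yields \eqref{f:A_op_intr}, but only in the range $|A|\asymp|V|$.)
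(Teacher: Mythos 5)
Your main argument takes a genuinely different route from the paper's, and it also has a gap that I think is hard to repair in the generality needed.

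\textbf{What the paper actually does.} The paper never passes to an approximate subgroup of $\Gr$, and it explicitly says it wants to avoid the Larsen--Pink / Breuillard--Green--Tao machinery (in particular no hypothesis that $A$ generates). Instead, it works with Gowers $\mathcal{U}^k$-norms. The key geometric input is the observation you mention only in your last parenthetical: for $g\in\Gr$, either $\dim(V\cap gV)<\dim(V)$ or $g\in\Stab(V)$, and $\Stab(V)$ is an algebraic subgroup contained in a shift of $V$, hence $|\Stab(V)|\le t:=t(V)$. The paper iterates this $d=\dim(V)$ times: for $A\subseteq V$ one has $A_{\vec s}\subseteq V_{\vec s}$, and after $d$ successive differencings $V_{\vec s}$ has dimension $0$ unless some $s_j$ lies in a ``stabiliser-type'' set of size $\ll t$. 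This gives $\|A\|_{\mathcal U^{d+1}}\ll_{d,D}t\,|A|^{d+1}$ (Theorem~\ref{t:E_k_variety}, bound \eqref{f:A_in_V_bound}) --- note that $|A|$, not $|V|$, appears on the right. The bridge back to energy is Lemma~\ref{l:Gowers_char_AB}: a quantitative relation between $\mathcal U^{k}$-norms and $\E(A)$, whose non-abelian case uses BSG once but only to reduce to the abelian-type estimate inside a small-doubling piece. Since $t\ll|\G|$ and $|A|\ge|\G|^{1+\eps}\ge t^{1+\eps}$, the conclusion $\E(A)\ll|A|^{3-\d}$ follows. Your ``hands-on alternative'' at the end is essentially the one-step version of this: bounding $\E(A)$ via $|A\cap Ag|\le|V\cap Vg|$ only works when $|A|\asymp|V|$, precisely because the right-hand side involves $|V|$. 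The paper's whole point is that iterating through $\mathcal U^{d+1}$ replaces $|V|$ by $|A|$, and that is what lets the bound bite down to $|A|\ge t^{1+\eps}$.

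\textbf{The gap in your route.} You invoke a classification of $|A|^{O(\d)}$-approximate subgroups of a general finite algebraic group $\Gr(\F_q)$ which produces an \emph{algebraic} subgroup $H\le\Gr$, defined with bounded complexity, with $|H(\F_q)|$ within a factor $|A|^{O(\d)}$ of $|A'|$, covering $S=A'A'^{-1}$ by $|A|^{O(\d)}$ cosets, and uniformly in $q$. I don't know a reference for this in the generality needed. Pyber--Szab\'o / Breuillard--Green--Tao product theorems are for (quasi)simple groups of bounded rank and require a generation hypothesis; the Breuillard--Green--Tao / Hrushovski structure theorems for general approximate groups produce coset nilprogressions or an $\infty$-definable normal subgroup, not a clean finite algebraic subgroup with the quantitative control you need, and in the Hrushovski version the bounds are not of polynomial type in $K$. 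So the step ``$S$ is covered by $\le|A|^{O(\d)}$ cosets of $H(\F_q)$ with $|H(\F_q)|\ge|A|^{1-O(\d)}$'' is exactly the kind of black box the paper is designed to avoid, and as stated it is not established. Your Lang--Weil step (forcing $V\supseteq a_0hH^\circ$) and the final contradiction via maximality of $\G$ are fine once you have $H$, and your treatment of the ``iff'' is correct; the load-bearing classification step is the problem. If you want a self-contained proof, the fix is to replace that step with the paper's dimension-reduction-through-Gowers-norms argument, which uses only B\'ezout, Lang--Weil, and stabilisers of varieties.
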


Theorem above gives us a non--trivial upper bound for the operator norm of $\FF{A}$ (all definitions can be found in section \ref{sec:def}) for  any sufficiently large subset of a  Chevalley group living in a variety differ from the maximal parabolic subgroup.

\begin{theorem}
	Let $\Gr_r (\F_q)$ be a finite Chevalley group with rank $r$ and odd $q$ and let $\Pi \le \Gr_r (\F_q)$ be its maximal (by size) parabolic subgroup.
	Also, let $V \subset \Gr_r (\F_q)$ be a variety differ from all shifts of conjugates of  $\Pi$.
	Then for any $A\subseteq V$, $|A| \ge |\Pi| q^{-1+c}$, $c>0$ one has 
	\begin{equation}\label{f:A_op_intr'}
	\| \FF{A} (\rho) \|_o \le |A|^{1-\d} \,,
	\end{equation}
	where $\d = \d(c,r) >0$ and $\rho$ is any non--trivial unitary representation of $\Gr_r (\F_q)$. 
	\label{t:Chevalley_A_intr}
\end{theorem}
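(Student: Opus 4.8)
The plan is to derive Theorem~\ref{t:Chevalley_A_intr} from Theorem~\ref{t:A_op_intr} by translating the bound on multiplicative energy into a bound on the operator norm of $\FF{A}(\rho)$ and by checking that the maximal parabolic $\Pi$ is exactly the obstruction identified in Theorem~\ref{t:A_op_intr}. First I would recall the standard Fourier--analytic identity on a finite group: for any non--trivial irreducible (or merely non--trivial unitary) representation $\rho$ one has
\begin{equation*}
\| \FF{A}(\rho) \|_o^2 \le \| \FF{A}(\rho) \FF{A}(\rho)^* \|_o \le \frac{1}{d_\rho} \tr\big( \FF{A}(\rho) \FF{A}(\rho)^* \big) \le \frac{1}{d_{\min}} \sum_{\rho' \neq 1} \tr\big( \FF{A}(\rho') \FF{A}(\rho')^* \big)\,,
\end{equation*}
and the last sum, by Plancherel, equals $|\Gr_r(\F_q)| \cdot \big( \E(A) / |A|^? \big)$ up to the contribution of the trivial representation; more precisely $\sum_{\rho'} d_{\rho'} \tr(\FF{\nu}(\rho')\FF{\nu}(\rho')^*) = |\Gr| \| \nu \|_2^2$ for $\nu = 1_A * \widetilde{1_A}$, whose $\ell^2$-norm squared is exactly $\E(A)$. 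Hence $\sum_{\rho' \neq 1} \|\FF{A}(\rho')\FF{A}(\rho')^*\|_o \ll |\Gr| \big( \E(A) - |A|^4/|\Gr| \big)/|A|$-type expression, and the key point is that for a \emph{quasirandom} group all non--trivial $d_{\rho'}$ are at least $|\Gr|^{\alpha}$ for some $\alpha=\alpha(r)>0$; combining this with $\E(A) \ll |A|^{3-\delta}$ from Theorem~\ref{t:A_op_intr} gives $\|\FF{A}(\rho)\|_o^2 \ll |\Gr|^{1-\alpha} |A|^{2-\delta}$, and the hypothesis $|A| \ge |\Pi| q^{-1+c}$ together with $|\Pi| \approx |\Gr|^{1 - \Theta(1/r)}$ (the maximal parabolic has index roughly $q^{r}$ in a group of size roughly $q^{\dim \Gr}$, $\dim \Gr \approx r^2$) lets me absorb the factor $|\Gr|^{1-\alpha}$ into a power of $|A|$, yielding $\|\FF{A}(\rho)\|_o \le |A|^{1-\delta'}$.

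The second ingredient is verifying the variety hypothesis. Theorem~\ref{t:A_op_intr} requires that $V$ not contain a coset of an algebraic subgroup $\G$ with $|\G| = \Omega(|V|)$. I would argue that inside a Chevalley group the largest proper algebraic subgroups (up to conjugacy and bounded index) are precisely the parabolic subgroups, with the Borel-containing maximal parabolic $\Pi$ being the largest; this is a Levi/Bruhat-decomposition fact. So if the variety $V$ is different from every shift of every conjugate of $\Pi$, then either $V$ contains no large-subgroup coset at all, or the largest such coset is strictly smaller than $|\Pi|$, hence of size $\ll |\Pi| q^{-1}$ or so, and in either case the threshold $|A| \ge |\Pi| q^{-1+c}$ forces $|A| \ge |\G|^{1+\eps}$ for the relevant maximal $\G$, with $\eps = \eps(c,r)>0$. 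This is exactly the regime where \eqref{f:A_op_intr} applies with an explicit $\delta = \delta(\eps, \dim V) = \delta(c,r)$, using that $\dim V \le \dim \Gr_r \ll r^2$ is bounded in terms of $r$, and similarly $\deg V, \deg \Gr$ are controlled (here one uses that a Chevalley group of bounded rank sits in $\F_q^n$ with bounded $n$ and bounded degree, so the implied constants in Theorem~\ref{t:A_op_intr} become constants depending only on $r$).

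I would then assemble the two pieces: choose $\delta' < \delta/2$, note $\E(A) \ll |A|^{3-\delta}$, feed this into the quasirandomness bound above, and check the arithmetic $|\Gr|^{1-\alpha} |A|^{2-\delta} \le |A|^{2-2\delta'}$ using $|A| \ge |\Pi| q^{-1+c} \gg |\Gr|^{\beta}$ for an explicit $\beta = \beta(c,r) \in (0,1)$ close to $1$, so that $|\Gr|^{1-\alpha} = (|\Gr|^{\beta})^{(1-\alpha)/\beta} \le |A|^{(1-\alpha)/\beta}$, and $(1-\alpha)/\beta + 2 - \delta \le 2 - 2\delta'$ holds provided $\alpha$ and $\delta$ dominate; if the naive inequality is tight one iterates or tightens $\beta$ by shrinking $c$. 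The main obstacle I expect is precisely this bookkeeping of exponents --- ensuring that the quasirandomness gain $|\Gr|^{\alpha}$ (whose size I must pin down as a function of $r$, e.g. $\alpha \asymp 1/r$ from the minimal faithful representation dimension bounds of Landazuri--Seitz) is genuinely enough to swallow the unavoidable $|\Gr|$ factor coming from Plancherel, given only the relatively weak density hypothesis $|A| \ge |\Pi| q^{-1+c}$ rather than $|A| \ge |\Gr|^{1-o(1)}$. A secondary subtlety is the classification statement that $\Pi$ is the unique largest algebraic subgroup up to the stated equivalence, which I would cite from the structure theory of Chevalley groups rather than prove, and the verification that $\delta$ from Theorem~\ref{t:A_op_intr} can be taken uniform over the finitely many conjugacy types of maximal subgroups of $\Gr_r$.
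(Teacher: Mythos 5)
The Plancherel step in your proposal is the weak link, and it genuinely fails for the range of $|A|$ in the hypothesis. Your chain gives, in the sharpest form,
\[
\| \FF{A}(\rho) \|_o^4 \le \| \FF{A}(\rho)\FF{A}(\rho)^* \|^2 = \| \FF{\nu}(\rho) \|^2
\le \frac{1}{d_{\min}} \sum_{\rho'\neq 1} d_{\rho'}\| \FF{\nu}(\rho') \|^2
\le \frac{|\Gr|\,\E(A)}{d_{\min}}\,,
\]
with $\nu = A*\tilde A$. Feeding in $\E(A)\ll |A|^{3-\delta_0}$ and $d_{\min}\ge |\Gr|^\alpha$ gives $\|\FF{A}(\rho)\|_o^4 \ll |\Gr|^{1-\alpha}|A|^{3-\delta_0}$, so to conclude $\|\FF{A}(\rho)\|_o \le |A|^{1-\delta_1}$ you need $|A|\gg |\Gr|^{(1-\alpha)/(1-\delta_0+4\delta_1)}$. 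Since $\delta_0, \delta_1 > 0$ this exponent exceeds $1-\alpha$, and $1-\alpha$ is already essentially the exponent $\beta$ with $|\Pi|\approx |\Gr|^{\beta}$ (the quasirandomness gap and the parabolic index both scale like the same power of $q$). Concretely for $\mathrm{PSL}_2(q)$: $|\Gr|\asymp q^3$, $d_{\min}\asymp q$, $|\Pi|\asymp q^2$, so $\alpha \approx 1/3$ and your hypothesis is $|A|\gtrsim q^{1+c}$, i.e.\ $|A|\gtrsim |\Gr|^{(1+c)/3}$, whereas the Plancherel route needs $|A|\gtrsim |\Gr|^{2/3 + \Theta(\delta)}$. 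For small $c$ there is a power-of-$q$ gap and the single-pass argument does not close it.

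What the paper does instead is invoke the full Bourgain--Gamburd expansion machinery rather than one application of Plancherel. From Corollary~\ref{c:FF(V)} and Lang--Weil, the paper first shows that $|A\cap x\G| \ll |A|q^{-c/2}$ for \emph{every} proper subgroup $\G$ and every coset $x\G$ (separating the case $\G$ conjugate to $\Pi$, handled via the Lang--Weil bound on $|V\cap x\Pi y|$, from the case of smaller $\G$, handled via the energy bound and $|A_*^{-1}A_*|\le |H|$). This non-concentration statement for all cosets, together with quasirandomness (Landazuri--Seitz), is exactly the input to the iterated $\ell^2$-flattening argument of \cite{H}, \cite{H_ideas}, \cite[Sections 8, 10]{sh_as}: one passes from $\T_2(A)=\E(A)$ to $\T_k(A)$ for $k$ growing, gaining a power at each step until $\T_k(A)$ is close to $|A|^{2k}/|\Gr|$, and only then does the Plancherel step give the operator-norm bound. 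Your single Plancherel pass skips this iteration and therefore loses the crucial $|\Gr|$ factor. You would need to supply this iteration (or cite it explicitly as the paper does), and in particular you would need to establish the non-concentration of $A$ on \emph{all} proper subgroup cosets, which your proposal does not address --- it only checks that $V$ avoids shifts of conjugates of $\Pi$, which bounds $t(V)$ but does not by itself control $|A\cap x\G|$ for smaller $\G$.
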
 

Bound \eqref{f:A_op_intr'} implies the uniform distribution of $A$ 
%between 
among 
any sets with small product, see Proposition \ref{p:UD_non-commutative} below.
It is interesting that all  our conditions in Theorems \ref{t:A_op_intr}, \ref{t:Chevalley_A_intr}  concerning intersection of $A$ with subgroups  are formulated in terms of $V$ but not $A$.
In a similar way, we do not require that $A$ is a generating set of $\Gr$. 
It differs our result from Larsen--Pink machinery, see \cite{LP} and also \cite{BGT}.

%\bigskip 

%Also, 
Theorem \ref{t:A_op_intr} has a naturally--looking algebraic consequence (see rigorous formulation in section \ref{sec:proof}).

\begin{corollary}
	Suppose that  $\Gr$ is  a finite  algebraic group, and  $V\subseteq \Gr$ is   
%an  absolutely irreducible 
a variety.
	Then size of maximal (by cardinality)  subgroup of $V$ is comparable with size of maximal (by cardinality) algebraic subgroup of $V$. 
\label{c:A_op_intr}	
\end{corollary}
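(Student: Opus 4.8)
The plan is to deduce the Corollary from Theorem~\ref{t:A_op_intr} by a contrapositive argument. Suppose $H \le \Gr$ is the maximal (by cardinality) subgroup contained in $V$, and let $\G \le \Gr$ be the maximal \emph{algebraic} subgroup such that some coset of $\G$ lies in $V$. Trivially $|\G| \le |H|$ cannot be assumed in general, but the Zariski closure $\ov{H}$ of $H$ is an algebraic subgroup of $\Gr$ (the closure of a subgroup is a subgroup), so if $\ov{H} \subseteq V$ we would immediately get $|H| \le |\ov{H}| \le |\G|$ and be done. The genuine content is therefore the reverse inequality $|H| \ll |\G|$: we must rule out the possibility that $V$ contains a huge \emph{combinatorial} subgroup $H$ while every \emph{algebraic} subgroup with a coset in $V$ is much smaller.

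First I would set $A = H$ and observe that $\E(H) = |H|^3$ exactly, since for a subgroup every triple $(a,b,c) \in H^3$ determines $d = c b^{-1} a \in H$ with $ab^{-1} = cd^{-1}$. Now apply Theorem~\ref{t:A_op_intr} with this $A$: if we had $|H| \ge |\G|^{1+\eps}$ for some fixed $\eps > 0$, then for all sufficiently large $q$ the theorem yields $\E(H) \ll |H|^{3-\d}$ with $\d = \d(\eps,\dim V) > 0$, contradicting $\E(H) = |H|^3$ once $|H|$ (hence $q$) is large enough. Hence $|H| < |\G|^{1+\eps}$ for every $\eps > 0$, and more carefully, tracking the dependence, $|H| \le |\G|^{1+o(1)}$, or in the rigorous finitary formulation $|H| \le C |\G|$ for a constant $C$ depending only on $\dim(V),\deg(V),\dim(\Gr),\deg(\Gr),n$ — which is exactly the asserted comparability. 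Combined with the cheap direction $|\G| \le |H|$ (take $\G = \ov H$, noting $\ov H \subseteq \ov V$; one should check $\ov H \subseteq V$ or replace $V$ by $\ov V$, which does not change the statement since a variety is Zariski closed), this gives $|\G| \le |H| \ll |\G|$.

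The main obstacle is the bookkeeping at the boundary: Theorem~\ref{t:A_op_intr} is stated with a strict gain $\d>0$ only under the hypothesis $|A|\ge|\G|^{1+\eps}$, so running the contradiction shows $|H|\le|\G|^{1+\eps}$ for each $\eps$, but to conclude true comparability $|H|\ll|\G|$ one needs the dependence $\d(\eps,\dim V)$ not to degenerate too fast as $\eps\to 0$, or alternatively one must invoke the ``iff'' clause of the theorem directly: $V$ fails to satisfy \eqref{f:A_op_intr} precisely when it contains a coset of an algebraic subgroup of size $\Omega(|V|)$, and since $|H|\le|V|$ always, the subgroup $H$ — if comparable to $|V|$ — forces such an algebraic coset, giving $|\G|=\Omega(|V|)=\Omega(|H|)$. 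Marshalling these uniformity constants correctly, and making sure the phrase ``comparable'' is interpreted with the same family of implied constants as in Theorem~\ref{t:A_op_intr}, is the only delicate point; the algebraic input (closure of a subgroup is a subgroup, a variety is closed) is standard.
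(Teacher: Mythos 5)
Your core idea is the right one and matches the paper's mechanism: for a subgroup $H\subseteq V$ one has $\E(H)=|H|^3$ exactly, so any non‑trivial upper bound on the energy of subsets of $V$ constrains $|H|$ in terms of the algebraic‑subgroup parameter. But the way you close the argument has two genuine gaps. First, Theorem~\ref{t:A_op_intr} is too qualitative for what you want: running the contradiction for each fixed $\eps>0$ yields only ``$|H|<|\G|^{1+\eps}$ or $|H|\ll_\eps 1$'', hence $|H|\ll_\eps |\G|^{1+\eps}$, and without an explicit rate for $\d(\eps,\dim V)$ as $\eps\to 0$ this does not give the asserted linear comparability $|H|=O(|\G|)$. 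Second, your fallback to the ``iff'' clause does not repair this: that clause compares $|\G|$ with $|V|$, so it is vacuous whenever $|H|$ is much smaller than $|V|$ (e.g.\ $\dim V=5$ but $H$ lives inside a $2$‑dimensional piece), which is exactly the regime you need to control. What actually closes the argument in the paper is the quantitative inequality \eqref{f:E(A)_in_V} of Corollary~\ref{cor:E(A)_in_V}, namely $\E(A)\ll_{d,D} |A|^3\bigl(t(V)/|A|\bigr)^{\beta}$ with an explicit $\beta=\beta(d)>0$ (here $t(V)$ is the maximal algebraic‑subgroup coset size from \eqref{def:t}); plugging in $A=H$ and $\E(H)=|H|^3$ forces $t(V)/|H|\gg_{d,D} 1$, i.e.\ $|H|\ll_{d,D} t(V)$, which is exactly the reformulation after Corollary~\ref{c:A_op_intr}. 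You could not have cited this since you were blind to Section~\ref{sec:proof}, but the proposal as written does not reach the conclusion.

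One further caution about your ``cheap direction'' via Zariski closure. Since a finite subgroup $H$ of $\Gr(\F_q)$ is a finite point set, its Zariski closure is $H$ itself, which is a variety; if ``algebraic subgroup'' meant any subgroup that happens to be Zariski‑closed, your closure remark would already give $|H|\le|\ov H|\le|\G|$ and, combined with the trivial $|\G|\le|H|$, an equality — making the Corollary vacuous. The notion of algebraic subgroup implicit in the paper (see the discussion around \eqref{def:t}, the Noether‑reduction remark after \eqref{f:LW}, and the appearance of $\Stab(V)$ in the proof of Theorem~\ref{t:E_k_variety}) is of bounded complexity, with degree controlled by $d,D,n,\dim\Gr,\deg\Gr$; a large finite set does not qualify. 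So the closure argument should not be presented as a valid ``and be done'' route, and the whole content of the corollary is indeed the inequality $|H|\ll t(V)$ that the quantitative energy bound supplies. Also, a minor algebra slip: from $ab^{-1}=cd^{-1}$ one gets $d=ba^{-1}c$, not $cb^{-1}a$; harmless here since both lie in $H$, but worth fixing.
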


In other words, 
if $\Gamma_a$ is a maximal by cardinality algebraic subgroup in shifts of $V$, then for any $x$ and $\G \le \Gr$ such that $x\G \subseteq V$ one has $|\G| = O(|\G_a|)$.
In big--O here we assume that $q \to \infty$ and the implied constant depends on $\dim(V), \deg(V), \dim(\Gr), \deg(\Gr)$ and dimension of the ground affine space.

We obtain several applications of 
%the result above.
Theorem \ref{t:A_op_intr}. 
In the first one we take our variety $V$ be a (Zariski closure of) conjugate class $C$ of a finite algebraic group.
In \cite{LS1}, \cite{LSS} authors obtain that for any such $C$ one has $|CC| \gg \min \{ |C|^{2-o(1)}, |\Gr| \}$. 
% $C^n = \Gr$, provided $n\ll \log |\Gr|/\log |C|$ (more precisely see Theorem \ref{t:gr_conj} below).
%the following results on growth of normal sets.
%We prove that in the case of bounded rank the equality $A^n = \Gr$ takes place for any large subset of $C$, see \ref{c:bounded_rank}. 
We prove that in certain cases one has $|AA| \gg |A|^{1+c}$, $c>0$ for {\it any} sufficiently large subset $A$ of $C$.

Of course Theorem \ref{t:Chevalley_A_intr}
% as well as our result on  $A^n = \Gr$ are 
is based on a purely non--commutative phenomenon of growth in groups and, say, estimate \eqref{f:A_op_intr'} does not hold in $\F^n_q$. 
Nevertheless,  
%Further 
in section \ref{sec:applications} we obtain a purely commutative application to so--called {\it restriction} problems.
%, see below. 
%section \ref{sec:applications} below. 
Here we have deal with the restriction phenomenon in finite fields, see \cite{MoT} and good survey \cite{IKL}. 
We prove (all definitions can be found in section \ref{sec:appendix})

\begin{theorem}
	Let $V\subseteq \F^n_q$ be a variety, $d=\dim(V)$.
	Suppose that $V$ does not contain any line. 
	Then $R^* (\frac{4}{3-c} \rightarrow 4) \lesssim 1$, where $c=c(d) > 0$. 
	\label{t:restriction_intr}
\end{theorem}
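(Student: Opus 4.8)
The plan is to deduce the restriction estimate from a multiplicative-energy bound via a standard $L^2$-based restriction machinery, the point being that a variety $V\subseteq\F_q^n$ containing no line is, after a suitable embedding, a subset of an algebraic group in which no large coset of a subgroup lives, so Theorem \ref{t:A_op_intr} applies. First I would set up the measure $d\sigma$ on $V$ (normalized counting measure) and recall that, by the standard duality and $TT^*$ argument (see \cite{MoT}, \cite{IKL}), the extension estimate $R^*(\tfrac{4}{3-c}\to 4)\lesssim 1$ is equivalent to the inequality $\|\widehat{f\,d\sigma}\|_{L^4(\F_q^n)}\lesssim \|f\|_{L^{(4/(3-c))'}(d\sigma)}$, and by interpolation against the trivial $L^\infty\to L^4$ bound it suffices to prove the endpoint $L^2(d\sigma)\to L^4$ statement with a gain, i.e. $\|\widehat{f\,d\sigma}\|_4^4 \lesssim |V|^{-c'}\|f\|_{L^2(d\sigma)}^4$ for some $c'>0$. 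Expanding the fourth power, $\|\widehat{f\,d\sigma}\|_4^4$ is (up to normalization) a weighted count of additive quadruples $x_1+x_2=x_3+x_4$ with $x_i\in V$, which for $f=1_A$, $A\subseteq V$, is exactly the additive energy $E^+(A)$.

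The heart of the matter is therefore: any variety $V\subseteq\F_q^n$ with no line has additive energy $E^+(A)\ll|A|^{3-\delta}$ for every $A\subseteq V$ with $|A|$ not too small. To get this from Theorem \ref{t:A_op_intr} I would realize the additive group $\F_q^n$ (or the relevant ambient) as an algebraic group $\Gr$ — for instance the group of upper unitriangular $2\times2$-type matrices, or simply $\F_q^n$ itself viewed as a unipotent algebraic group — so that additive energy $E^+(A)$ coincides with the multiplicative energy $\E(A)$ in $\Gr$. A line in $\F_q^n$ is precisely a coset of a one-dimensional algebraic subgroup; more generally, any coset of a positive-dimensional algebraic subgroup of $\F_q^n$ contains a line. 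Hence the hypothesis "$V$ contains no line" forces: no coset of a subgroup $\G$ with $\dim\G\ge1$ is contained in $V$, so the maximal such $\G$ in Theorem \ref{t:A_op_intr} has $\dim\G=0$, i.e. $|\G|=O(1)$. Then for $|A|\ge|\G|^{1+\eps}$ — which holds once $|A|$ exceeds an absolute constant, hence certainly under any mild largeness hypothesis — Theorem \ref{t:A_op_intr} yields $\E(A)=E^+(A)\ll|A|^{3-\delta}$ with $\delta=\delta(d)>0$ depending only on $d=\dim(V)$ (and the degree of $V$, which is controlled by $n$ and $d$).

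With the energy bound in hand, I would feed it back into the $L^4$-extension estimate: for $f=1_A$ one gets $\|\widehat{1_A\,d\sigma}\|_4^4 \lesssim |A|^{3-\delta}/|V|^3$, while $\|1_A\|_{L^2(d\sigma)}^4 = (|A|/|V|)^2$, so the ratio is $\lesssim (|A|/|V|)^{1-\delta}\le 1$; a standard dyadic-pigeonholing / restricted-strong-type promotion then upgrades this from characteristic functions to all $f\in L^2(d\sigma)$ and, by interpolation with the trivial $L^1(d\sigma)\to L^\infty$ bound, delivers $R^*(\tfrac{4}{3-c}\to4)\lesssim1$ for a suitable $c=c(d)>0$ (explicitly $c$ can be taken a fixed multiple of $\delta$).

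The main obstacle I expect is the translation step: ensuring that the no-line hypothesis on $V$ genuinely rules out \emph{all} cosets of positive-dimensional algebraic subgroups of the chosen ambient group, and that the relevant group and $V$ have bounded dimension and degree so that $\delta$ depends only on $d$. If $\F_q^n$ itself is used as $\Gr$ this is essentially immediate since every positive-dimensional subgroup of $(\F_q^n,+)$ is a linear subspace and contains lines; the only care needed is that the degree of $V$ is $O_{n,d}(1)$, which is automatic for a variety of dimension $d$ in affine $n$-space. A secondary technical point is bookkeeping with the normalization of $d\sigma$ and with the non-endpoint interpolation so that the exponent $\tfrac{4}{3-c}$ comes out exactly; this is routine given the energy bound.
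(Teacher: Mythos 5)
Your proposal follows the paper's argument exactly: the no-line hypothesis forces $t(V)=1$, so Corollary~\ref{cor:E(A)_in_V} (equivalently Theorem~\ref{t:A_op_intr} in the abelian group $\F_q^n$) yields $\E(A)\ll|A|^{3-c}$ for every $A\subseteq V$, and the Mockenhaupt--Tao $L^4$-machinery with dyadic pigeonholing converts this energy bound into $R^*\bigl(\tfrac{4}{3-c}\to 4\bigr)\lesssim 1$, which is precisely what the paper does. Two small slips worth fixing: by the paper's definition the extension estimate reads $\|(f\,d\sigma)^\vee\|_{L^4}\lesssim\|f\|_{L^{4/(3-c)}(V,d\sigma)}$ with no conjugate exponent on the right, and since the target source exponent $\tfrac{4}{3-c}<2$ you must interpolate the restricted $L^2(d\sigma)\to L^4$ bound against $L^1(d\sigma)\to L^\infty$ (as you correctly say at the end), not against $L^\infty\to L^4$ as you first write.
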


In papers 
%\cite{MoT}, \cite{IK}, \cite{IKL}, \cite{Lewko_general} 
\cite{IK}---\cite{MoT} and \cite{Volobuev} 
authors consider some particular varieties as cones, paraboloids and  spheres. 
Our result is weaker but on the other hand we have deal with an almost arbitrary 
%  general 
variety $V$.

%The author thanks Misha Rudnev for useful discussions. 

We thank Nikolai Vavilov for  useful discussions.  
%and Nikolay Moshchevitin for valuable discussions and  encouragement.
We deeply thank Brendan Murphy for his idea to study energies of subsets of various varieties.  

\section{Definitions}
\label{sec:def}

Let $\Gr$ be a group with the identity $1$.
Given two sets $A,B\subset \Gr$, define  the \textit{product set}  of $A$ and $B$ as 
$$AB:=\{ab ~:~ a\in{A},\,b\in{B}\}\,.$$
In a similar way we define the higher product sets, e.g., $A^3$ is $AAA$. 
Let $A^{-1} := \{a^{-1} ~:~ a\in A \}$. 
%The Ruzsa triangle inequality \cite{Ruz} says that
%\[
%|C| |AB| \le |AC||C^{-1}B| 
%\] 
%for any sets $A,B,C \subseteq \Gr$. 
%%It follows that $|A^n| \le K^{n-2}|A|$, where $K=|AAA|/|A|$.
As usual, having two subsets $A,B$ of a group $\Gr$,  denote by 
\[
\E(A,B) = |\{ (a,a_1,b,b_1) \in A^2 \times B^2 ~:~ a^{-1} b = a^{-1}_1 b_1 \}| 
\]
the {\it common energy} of $A$ and $B$. 
Clearly, $\E(A,B) = \E(B,A)$ and by the Cauchy--Schwarz inequality 
\begin{equation}\label{f:energy_CS}
\E(A,B) |A^{-1} B| \ge |A|^2 |B|^2 \,.
\end{equation}
In a little more general way define
\[
	\E^L_k (A) =  |\{ (a_1,\dots, a_k, b_1, \dots, b_k) \in A^{2k}  ~:~ a^{-1}_1 b_1  = \dots = a^{-1}_k b_k  \}| \,,
\]
and, similar, one can define $\E^R_k (A)$. 
For $k=2$, we have $\E^L_k (A) = \E^R_k (A)$ but for larger $k$ it is not 
the case. 
%true.
If there is no difference between  $\E^L_k (A)$ and $\E^R_k (A)$, then we write just $\E_k (A)$.  
In this paper we use the same letter to denote a set $A\subseteq \Gr$ and  its characteristic function $A: \Gr \to \{0,1 \}$.

First of all, we recall some notions and simple facts from the representation theory, see, e.g., \cite{Naimark} or \cite{Serr_representations}.
For a finite group $\Gr$ let $\FF{\Gr}$ be the set of all irreducible unitary representations of $\Gr$. 
It is well--known that size of $\FF{\Gr}$ coincides with  the number of all conjugate classes of $\Gr$.  
For $\rho \in \FF{\Gr}$ denote by $d_\rho$ the dimension of this representation. 
%By $d_{\min} (\Gr)$ denote the quantity 
%%$d_{\min} (\Gr) = 
%$\min_{\rho \neq 1} d_\rho$. 
Thus $\Gr$ is a quasi--random group in the sense of Gowers (see \cite{Gow_random}) iff 
%$d_{\min} (\Gr) \ge |\Gr|^\eps$, where $\eps>0$.  
$d_{\rho} \ge |\Gr|^\eps$, where $\eps>0$ and $\rho$ is any non--trivial irreducible unitary representation of $\Gr$.  
We write $\langle \cdot, \cdot \rangle$ for the corresponding  Hilbert--Schmidt scalar product 
$\langle A, B \rangle = \langle A, B \rangle_{HS}:= \tr (AB^*)$, where $A,B$ are any two matrices of the same sizes. 
Put $\| A\| = \sqrt{\langle A, A \rangle}$.
%Clearly, $\langle \rho(g) A, \rho(g) B \rangle = \langle A, B \rangle$ and $\langle AX, Y\rangle = \langle X, A^* Y\rangle$.
%$Also, we have $\sum_{\rho \in \FF{\Gr}} d^2_\rho = |\Gr|$. 
Finally, it is easy to check that for any matrices $A,B$ one has $\| AB\| \le \| A\|_{o} \| B\|$ and $\| A\|_{o} \le \| A \|$, where  the operator $l^2$--norm  $\| A\|_{o}$ is just 
%the absolute value of 
the maximal singular value of $A$.  
%In particular, it shows that $\| \cdot \|$ is indeed a matrix norm. 

For any function $f:\Gr \to \mathbb{C}$ and $\rho \in \FF{\Gr}$ define the matrix $\FF{f} (\rho)$, which is called the Fourier transform of $f$ at $\rho$ by the formula 
\begin{equation}\label{f:Fourier_representations}
\FF{f} (\rho) = \sum_{g\in \Gr} f(g) \rho (g) \,.
\end{equation}
Then the inverse formula takes place
\begin{equation}\label{f:inverse_representations}
f(g) = \frac{1}{|\Gr|} \sum_{\rho \in \FF{\Gr}} d_\rho \langle \FF{f} (\rho), \rho (g^{-1}) \rangle \,,
\end{equation}
and the Parseval identity is 
\begin{equation}\label{f:Parseval_representations}
\sum_{g\in \Gr} |f(g)|^2 = \frac{1}{|\Gr|} \sum_{\rho \in \FF{\Gr}} d_\rho \| \FF{f} (\rho) \|^2 \,.
\end{equation}
The main property of the Fourier transform is the convolution formula 
\begin{equation}\label{f:convolution_representations}
\FF{f*g} (\rho) = \FF{f} (\rho) \FF{g} (\rho) \,,
\end{equation}
where the convolution 
%The convolution 
of two functions $f,g : \Gr \to \mathbb{C}$ is defined as 
\[
(f*g) (x) = \sum_{y\in \Gr} f(y) g(y^{-1}x) \,.
\]
Given a function $f : \Gr \to \mathbb{C}$ and a positive integer $k$, we write  $f^{(k)} = f^{(k-1)} * f$ for the $k$th convolution of $f$.  
Now let $k\ge 2$ be an integer and $f_j : \Gr \to \mathbb{C}$, $j\in [2k]$ be any functions. 
Denote by $\mathcal{C}$ the operator of convex conjugation. 
As in \cite{s_Kloosterman} define
\begin{equation}\label{def:T_k}
	\T_k (f_1,\dots, f_{2k}) = 
		\frac{1}{|\Gr|} \sum_{\rho \in \FF{\Gr}} d_\rho \langle \prod_{j=1}^k \mathcal{C}^j \FF{f}_j (\rho), \prod_{j=k+1}^{2k} \mathcal{C}^j \FF{f}_j (\rho)  \rangle \,.
\end{equation}
Put $\T_k (f) = \T_k (f,\dots, f_{})$.
For example,  we have, clearly, $\T_2 (A) = \E (A)$. 
It is easy to see that $\T^{1/2k}_k (f)$ defines a norm of a function $f$ (see \cite{s_Kloosterman}). 
This fact follows from the following inequality \cite[Lemma 10]{s_Kloosterman}
\begin{equation}\label{f:T_k_product}
	\T^{2k}_k (f_1,\dots, f_{2k}) \le \prod_{j=1}^{2k} \T_k (f_j) \,.
\end{equation}
In particular, $\E(A,A^{-1}) \le \E(A)$.
%, say. 

Now let us say a few words about varieties. 
Having a field $\F$ define an (affine) variety in $\F^n$ 
%is 
to be  
the  set of the form 
\[
	V = \{ (x_1,\dots, x_n) \in \F^n ~:~ p_j (x_1,\dots,x_n) = 0\, \mbox{ for all } j \} \,, 
\]
where $p_j \in \F[x_1,\dots,x_n]$. 
Let us recall some basic properties of varieties.
General theory of varieties and schemes  
can be found,
%consult, 
e.g., in  \cite{Hartshorne}. 
The union of any finite number of varieties is, clearly, a variety and the intersection of any number of varieties is a variety as well. 
Having a set $X$ we denote by $\mathrm{Zcl} (X)$ a minimal  (by inclusion) variety, containing $X$.
A variety is {\it irreducible} if it is not the union of two proper subvarieties. 
Every variety has a unique (up to inclusion) decomposition into finitely many irreducible components \cite{Hartshorne}. 
The {\it dimension} of $V$ is 
\[
	\dim(V) = \max \{ n ~:~ V\supseteq X_n \supset X_{n-1} \supset \dots \supset X_0 \neq \emptyset \} \,,	
\]
where $X_j$ are irreducible subvarieties of $V$.  
We will frequently use the simple fact that if $V_1 \subseteq V_2$ are two varieties and $V_2$ is irreducible, then either $V_1=V_2$, or $\dim(V_1) < \dim (V_2)$. 
A variety is {\it absolutely irreducible} if it is irreducible over $\ov{\F}$. 
In this paper we consider just these varieties.
% only. 

We define the {\it degree} of any irreducible variety $V$ with $\dim (V) = d$ as in \cite{Heintz}, namely,
\[
	\deg (V) = \sup \{ |L \cap V| <\infty ~:~ L \mbox{ is } (n-d)\mbox{-dimensional affine subspace in }  \F^n \} \,.
\]
For an arbitrary variety $V$ we denote $\deg(V)$ to be the sum of the degrees of its irreducible components.
Recall generalized B\'ezout Theorem (see \cite[Theorem 1]{Heintz}) : for any varieties $U,V$ one has 
\begin{equation}\label{f:deg_intersection}
\deg (U\cap V) \le \deg(U) \deg(V) \,.
\end{equation}

The signs $\ll$ and $\gg$ are the usual Vinogradov symbols.
If we want to underline the dependence on a parameter $M$, then we write $\ll_M$ and $\gg_M$. 
All logarithms are to base $2$.
Sometimes we allow ourselves to lose logarithmic powers of $|\F|$. 
In this situation we write $\lesssim$ and $\gtrsim$ instead of $\ll$, $\gg$.

%\section{Preliminaries} 
\section{On non--commutative Gowers norms}

Let  $\Gr$ be a group  and  $A\subseteq \Gr$ be a finite set.
Let 
$
\| A \|_{\mathcal{U}^{k}}
$
be the Gowers non--normalized $k$th--norm \cite{Gow_m} of the characteristic function of $A$ (in multiplicative form), see, say \cite{s_energy}:
$$
\| A \|_{\mathcal{U}^{k}}
=
\sum_{x_0,x_1,\dots,x_k \in \Gr}\,  \prod_{\vec{\varepsilon} \in \{ 0,1 \}^k} A \left( x_0 x^{\varepsilon_1}_1 \dots x^{\varepsilon_k}_k \right) \,,
$$
where $\vec{\varepsilon} = (\varepsilon_1,\dots,\varepsilon_k)$. 
%and for two vectors $(x_1,\dots,x_k), (x'_1,\dots,x'_k)$  we have defined $a^\omega = a$ if $\omega = 0$ and   $a^\omega = a'$.
For example,
$$
\| A \|_{\mathcal{U}^{2}} = \sum_{x_0,x_1,x_2\in \Gr} A(x_0) A(x_0 x_1 ) A(x_0 x_2) A(x_0 x_1 x_2) = \E (A)
$$
is the  energy of $A$ and $\| A \|_{\mathcal{U}^{1}} = |A|^2$. 
For any $\vec{s} = (s_1,\dots,s_k)  \in \Gr^k$ put 
\begin{equation}\label{def:A_vec{s}}
	A_{\vec{s}} (x) = \prod_{\vec{\varepsilon} \in \{ 0,1 \}^k} A \left(x s^{\varepsilon_1}_1 \dots s^{\varepsilon_k}_k \right) \,,
\end{equation}
and similar for an arbitrary function $f: \Gr \to \C$, namely, 
$$f_{\vec{s}} (x) = \prod_{\vec{\varepsilon} \in \{ 0,1 \}^k} \mathcal{C}^{\eps_1+\dots+\eps_k} f \left(x s^{\varepsilon_1}_1 \dots s^{\varepsilon_k}_k \right) \,,$$ 
where $\mathcal{C}$ is the operator of the conjugation.
E.g., $A_s (x) = A(x) A(xs)$ or, in other words,  $A_s = A\cap (As^{-1})$. 
Then, obviously, 
\begin{equation}\label{f:Gowers_sums_A_s}
	\| A \|_{\mathcal{U}^{k}} = \sum_{\vec{s}} |A_{\vec{s}}| \,.
\end{equation}
Also note that
\begin{equation}\label{f:Gowers_sq_A}
\| A \|_{\mathcal{U}^{k+1}} = \sum_{\vec{s}} |A_{\vec{s}}|^2 \,.
\end{equation}
Moreover, the induction property for Gowers norms holds (it follows from the definitions or see \cite{Gow_m}) 
\begin{equation}\label{f:Gowers_ind}
	\| A \|_{\mathcal{U}^{k+1}} = \sum_{s \in  A^{-1}A} \| A_s \|_{\mathcal{U}^{k}} \,,
\end{equation}
e.g., in particular, 
$$
	\| A \|_{\mathcal{U}^{3}} = \sum_{s \in A^{-1}A} \E(A_s) \,.
$$
The Gowers norms enjoy the following weak commutativity property. 
Namely, let $k=n+m$, and $\vec{s} = (s_1,\dots,s_k) = (\vec{u}, \vec{v})$, where the vectors $\vec{u}, \vec{v}$ have lengths $n$ and $m$, correspondingly. 
We have 
\[
	\| A \|_{\mathcal{U}^k} = \sum_{\vec{s}} \sum_x \prod_{\vec{\varepsilon} \in \{ 0,1 \}^k} A \left( x s^{\varepsilon_1}_1 \dots s^{\varepsilon_k}_k  \right)
	=
\]
\begin{equation}\label{f:comm_Uk}
	=
	\sum_{\vec{u},\vec{v}}\, \sum_{x} \prod_{\vec{\eta} \in \{ 0,1 \}^m}\, \prod_{\vec{\o} \in \{ 0,1 \}^n} 
		A \left( u^{\eta_1}_1 \dots u^{\eta_m}_m x v^{\o_1}_1 \dots v^{\o_n}_n  \right) \,.
\end{equation}
In particular, $\| A^{-1} \|_{\mathcal{U}^k} = 	\| A \|_{\mathcal{U}^k}$ and  $\| gA \|_{\mathcal{U}^k} = \| Ag \|_{\mathcal{U}^k} = \| A \|_{\mathcal{U}^k}$ for any $g\in \Gr$. 
To 
%prove 
obtain 
\eqref{f:comm_Uk} just make the changing of variables $u_j x = x \tilde{u}_j$ for $j\in [m]$.

It was proved in \cite{Gow_m} that ordinary Gowers $k$th--norms of the characteristic function of any subset of an abelian group $\Gr$ are connected to each other.
In \cite{s_energy} the  author shows that the connection for the non--normalized norms does not depend on size of the group $\Gr$. 
Here we formulate a particular case of Proposition 35 from \cite{s_energy}, which relates  $\| A \|_{\mathcal{U}^{k}}$ and $\| A \|_{\mathcal{U}^{2}}$, see Remark 36 here.
%In the  proof from \cite{s_energy} the group $\Gr$ is supposed to be abelian but the argument takes place in the general case thanks to commutative identity \eqref{f:comm_Uk}. 

\begin{lemma}
	Let $A$ be a finite subset of a commutative group $\Gr$.
	Then for any integer $k\ge 1$ one has
	$$
	\| A \|_{\mathcal{U}^{k+1}} \ge
	\frac{\| A \|^{(3k-2)/(k-1)}_{\mathcal{U}^{k}}}{\| A \|^{2k/(k-1)}_{\mathcal{U}^{k-1}}} \,.
	$$
	In particular, 
	$$
	\| A \|_{\mathcal{U}^{k}} \ge \E(A)^{2^k-k-1} |A|^{-(3\cdot 2^k -4k -4)} \,.
	$$
	\label{l:Gowers_char}
\end{lemma}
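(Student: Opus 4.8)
The plan is to take the first (three-term) inequality as given: it is, as the statement indicates, the special case of \cite[Proposition~35]{s_energy} comparing three consecutive Gowers norms, and the explicit bound is essentially \cite[Remark~36]{s_energy}, so one option is simply to quote both. To see how the explicit bound follows from the three-term one, note first that for $k=1$ the three-term inequality is formally undefined (the factor $1/(k-1)$), but the corresponding instance of the explicit bound is just $\|A\|_{\mathcal{U}^{1}}=|A|^{2}$, and for $k=2$ it is just $\|A\|_{\mathcal{U}^{2}}=\E(A)$; these two equalities will serve as the base of the iteration. If one wanted the three-term inequality itself self-contained, the natural route uses the slicing identities: for a vector $\vec t$ of length $k-1$ one has, by \eqref{f:Gowers_sums_A_s}, \eqref{f:Gowers_sq_A} and \eqref{f:Gowers_ind},
\[
	\|A\|_{\mathcal{U}^{k-1}}=\sum_{\vec t}|A_{\vec t}|,\qquad
	\|A\|_{\mathcal{U}^{k}}=\sum_{\vec t}|A_{\vec t}|^{2},\qquad
	\|A\|_{\mathcal{U}^{k+1}}=\sum_{\vec t}\E(A_{\vec t})\,,
\]
so the claim turns into an inequality among three moments of the family $\{A_{\vec t}\}$, to be produced by Hölder's inequality in the variable $\vec t$ together with elementary relations between $|B|$ and $\E(B)$ for a single set $B$; commutativity of $\Gr$ is used precisely in passing to these slices, and by construction the resulting constants do not see $|\Gr|$.

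Granting the three-term inequality, I would derive the explicit bound by a single multiplicative combination of all the relevant instances, rather than by a naive level-by-level induction — the latter does not close, because $\|A\|_{\mathcal{U}^{k-1}}$ sits in the \emph{denominator} while only lower bounds propagate along an induction. Concretely, clear denominators and raise the $j$th instance to a suitable positive power $c_j$ for $j=2,\dots,k-1$, then multiply; the $c_j$ are chosen so that every intermediate norm $\|A\|_{\mathcal{U}^{i}}$ with $3\le i\le k-1$ occurs with equal total exponent on the two sides and cancels, while $\|A\|_{\mathcal{U}^{1}}=|A|^{2}$ and $\|A\|_{\mathcal{U}^{2}}=\E(A)$ survive. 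The resulting constraints form a linear recursion that is readily solved (with positive solution, so that raising to the powers $c_j$ is legitimate), and the surviving inequality reads $\|A\|_{\mathcal{U}^{k}}^{N_k}\ge\E(A)^{P_k}|A|^{-Q_k}$; computing $P_k/N_k$ and $Q_k/N_k$ — equivalently, solving the scalar linear recursion obeyed by the exponents with the initial data from $k=1,2$ — gives $2^k-k-1$ and $3\cdot 2^k-4k-4$ respectively. The only arithmetic is the verification that $a_j=2^j-j-1$ and $b_j=3\cdot2^j-4j-4$ satisfy $\tfrac{3k-2}{k-1}a_k-\tfrac{2k}{k-1}a_{k-1}=a_{k+1}$ and the same identity for $b$; after clearing the factor $k-1$ both collapse to $k^{2}+k-2=(k-1)(k+2)$.

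I do not expect any serious obstacle once \cite[Proposition~35]{s_energy} is available: the only point that needs care is the telescoping of the previous paragraph (because of the denominator one genuinely must combine all levels at once, not argue inductively). The real difficulty is internal to the quoted proposition, namely the passage from the trivial monotonicity $\|A\|_{\mathcal{U}^{k}}\le\|A\|_{\mathcal{U}^{k+1}}$ — immediate from \eqref{f:Gowers_sums_A_s} and \eqref{f:Gowers_sq_A} since each slice $A_{\vec s}$ is $\{0,1\}$-valued — to the sharp three-term estimate with exponents $(3k-2)/(k-1)$ and $2k/(k-1)$ and constants independent of $|\Gr|$; deciding which slicing variables to feed into Hölder, and in which order, is where the work lies.
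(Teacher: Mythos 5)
Your proposal is correct and takes essentially the same route as the paper: the paper itself gives no proof of this lemma but simply cites \cite[Proposition~35, Remark~36]{s_energy}, exactly as you do for the three--term inequality, and your derivation of the explicit bound from it is sound (including the exponent identity check). One simplification worth recording: setting $z_j:=\|A\|_{\mathcal{U}^{j+1}}/\|A\|_{\mathcal{U}^{j}}^{2}$, the three--term inequality is precisely $z_j\ge z_{j-1}^{j/(j-1)}$ for $j\ge 2$, so telescoping gives $z_k\ge z_1^{k}=(\E(A)/|A|^4)^{k}$, i.e.\ $\|A\|_{\mathcal{U}^{k+1}}\ge \|A\|_{\mathcal{U}^{k}}^{2}\,\E(A)^{k}\,|A|^{-4k}$; since here every exponent is positive, a naive induction \emph{does} close, and $2a_k+k=a_{k+1}$, $2b_k+4k=b_{k+1}$ with $a_1=0$, $b_1=-2$ yield $a_k=2^k-k-1$, $b_k=3\cdot 2^k-4k-4$ directly, avoiding the all--levels--at--once combination.
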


Actually, one can derive Lemma \ref{l:Gowers_char} 
%follows 
from Lemma \ref{l:Gowers_char_AB} below but to prove this more general result we need an additional notation and arguments.

Given two functions $f,g: \Gr \to \mathbb{C}$ and an integer $k\ge 0$ consider the "scalar product"
\[
	\langle f, g \rangle_k := \sum_{\vec{s}, t} \sum_{x} f_{\vec{s}} (x) \overline{g_{\vec{s}} (xt)} = \overline{\langle g, f \rangle_k} \,, 
\]
where $\vec{s} = (s_1,\dots, s_k)$. 
For example, $\langle A, B \rangle_1 = \E(A,B)$, $\langle A, B \rangle_0 = |A||B|$, $\langle A, A \rangle_k = \|A\|_{\mathcal{U}^{k+1}}$,  
$\langle A, 1 \rangle_k = |\Gr| \|A\|_{\mathcal{U}^{k}}$ (for finite group $\Gr$). 
Clearly, $\langle f, g \rangle_0 = \left( \sum_x f(x) \right) \left( \overline{\sum_x g(x)} \right)$ but for $k\ge 1$ it is easy to see that 
$\langle f, g \rangle_k \ge 0$ because 
$\langle f, g \rangle_k = 
%\sum_{\vec{s}_*, s_k} (f_{\vec{s}_*} \circ \ov{f}_{\vec{s}_*}) (s_k)  \ov{(g_{\vec{s}_*} \circ \ov{g}_{\vec{s}_*}) (s_k)} 
\sum_{\vec{s}_*, s_k} |(f_{\vec{s}_*} * \ov{\tilde{g}}_{\vec{s}_*}) (s_k)|^2 \ge 0$, 
where $\vec{s}_* = (s_1,\dots, s_{k-1})$, and $\tilde{g}(x) := g(x^{-1})$. 
Also note that
\begin{equation}\label{f:A,B_scalar_exm}	
	\langle A, B \rangle_k = \sum_{\vec{s}} |A_{\vec{s}}| |B_{\vec{s}}| 
		= 
			\sum_{\vec{s}_*}  \sum_{s_k} |A_{\vec{s}_*} \cap A_{\vec{s}_*} s_k| |B_{\vec{s}_*} \cap B_{\vec{s}_*} s_k| \,.
\end{equation}

\begin{lemma}
	Let $\Gr$ be a commutative group and 
%	$f,g : \Gr \to \mathcal{R}^{+}$ be functions
	$f,g : \Gr \to \mathbb{C}$ be functions.
	Then for any integer $k\ge 1$ one has
\begin{equation}\label{f:Gowers_char_AB_1}
	\langle f, g\rangle^{3+1/k}_k \le \langle f, g\rangle^{2}_{k-1} \langle f, g\rangle^{}_{k+1} \| f\|^{1/k}_{\mathcal{U}^k} \| g\|^{1/k}_{\mathcal{U}^k} \,,
\end{equation}
	and hence for an arbitrary $k\ge 2$ and any sets $A,B \subseteq \Gr$  the following holds 
\begin{equation}\label{f:Gowers_char_AB_2}
	\E(A,B) \le (|A||B|)^{\frac{3}{2} - \frac{\beta(k+2)}{2}}  \langle A,B \rangle^{\beta}_k  \,,
\end{equation}	
	where $\beta = \beta(k) \in [4^{-k}, 2^{-k+1}]$.\\
	For any (not necessary commutative) group $\Gr$ if $\|A\|_{\mathcal{U}^k} \le |A|^{k+1-c}$, where $c>0$, then $\E(A) \le |A|^{3-c_*}$ with $c_* = c_*(c,k)>0$. 
\label{l:Gowers_char_AB}
\end{lemma}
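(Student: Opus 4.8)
The plan is to follow the standard "dependent random choice / energy increment" route for Gowers norms, but track everything multiplicatively (without normalisation) so that no dependence on $|\Gr|$ sneaks in. The key object is the scalar product $\langle f,g\rangle_k = \sum_{\vec s}|A_{\vec s}||B_{\vec s}|$ in the set case, and the point is that the map $k\mapsto \langle f,g\rangle_k$ is log-convex-like once we insert a Cauchy--Schwarz in the new variable $s_k$. Concretely, I would start from the representation \eqref{f:A,B_scalar_exm}, write $\langle f,g\rangle_{k} = \sum_{\vec s_*}\sum_{s_k}|A_{\vec s_*}\cap A_{\vec s_*}s_k|\,|B_{\vec s_*}\cap B_{\vec s_*}s_k|$, and apply H\"older's inequality in the pair $(\vec s_*, s_k)$ to interpolate between the exponent pattern appearing in $\langle\cdot\rangle_{k-1}$ (which controls $\sum_{\vec s_*}|A_{\vec s_*}||B_{\vec s_*}|$), in $\langle\cdot\rangle_{k+1}$ (which by \eqref{f:Gowers_sq_A}-type identities controls $\sum_{\vec s}|A_{\vec s}|^2$, $\sum_{\vec s}|B_{\vec s}|^2$ after another Cauchy--Schwarz), and in the "diagonal" terms $\|f\|_{\mathcal U^k}=\sum_{\vec s}|A_{\vec s}|$, $\|g\|_{\mathcal U^k}=\sum_{\vec s}|B_{\vec s}|$. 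Balancing the four exponents so that the left side comes out as $\langle f,g\rangle_k$ to the power $3+\tfrac1k$ is exactly the bookkeeping that produces \eqref{f:Gowers_char_AB_1}; this is morally the $\langle A,B\rangle$-version of Proposition 35 of \cite{s_energy}, and I expect the commutativity of $\Gr$ to enter precisely in the step where the convolution structure $\langle f,g\rangle_k=\sum_{\vec s_*,s_k}|(f_{\vec s_*}*\ov{\tilde g}_{\vec s_*})(s_k)|^2$ is used, since that is where one needs $f_{\vec s}$ to behave like an honest function under $*$.

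Next I would iterate \eqref{f:Gowers_char_AB_1} downward from level $k$ to level $1$, using $\langle f,g\rangle_1=\E(A,B)$ and $\langle f,g\rangle_0=|A||B|$ as the base, together with the trivial bounds $\|A\|_{\mathcal U^j}\le |A|^{j+1}$ (and similarly for $B$). Each application of \eqref{f:Gowers_char_AB_1} trades one factor of $\langle\cdot\rangle$ at a higher level for two at the level below plus harmless powers of $|A|,|B|$; solving the resulting linear recursion in the exponents gives a telescoped inequality of the shape $\E(A,B)\le (|A||B|)^{\frac32-\frac{\beta(k+2)}2}\langle A,B\rangle_k^{\beta}$ with $\beta=\beta(k)$ determined by the recursion. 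The two-sided bound $\beta\in[4^{-k},2^{-k+1}]$ should fall out of the crude estimate that at each of the $k-1$ steps $\beta$ gets multiplied by something between $\tfrac14$ and $\tfrac12$ (the reciprocal of the leading exponent $3+\tfrac1k\in(3,4]$, roughly). This yields \eqref{f:Gowers_char_AB_2}.

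For the final, non-commutative sentence, the trick is that commutativity is not actually needed: the inequality $\E(A)\le|A|^{3-c_*}$ under the hypothesis $\|A\|_{\mathcal U^k}\le|A|^{k+1-c}$ is a purely "one-variable" statement that can be obtained from the induction identity \eqref{f:Gowers_ind}, which holds in any group. I would argue contrapositively: suppose $\E(A)\ge|A|^{3-c_*}$ with $c_*$ small. By \eqref{f:Gowers_ind} and \eqref{f:Gowers_sums_A_s}, $\|A\|_{\mathcal U^3}=\sum_{s\in A^{-1}A}\E(A_s)$, and a standard convexity / popularity argument (dyadic pigeonholing on $|A_s|$, then Cauchy--Schwarz, exactly as in the commutative Lemma \ref{l:Gowers_char}) propagates a lower bound on $\E(A)$ into a lower bound on $\|A\|_{\mathcal U^3}$, then on $\|A\|_{\mathcal U^4}$, and so on up to $\|A\|_{\mathcal U^k}\ge\E(A)^{2^k-k-1}|A|^{-(3\cdot2^k-4k-4)}$ — the same bound as the second display of Lemma \ref{l:Gowers_char}, and its proof only used \eqref{f:Gowers_ind}, \eqref{f:Gowers_sums_A_s}, \eqref{f:Gowers_sq_A}, none of which require $\Gr$ abelian. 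Plugging $\E(A)\ge|A|^{3-c_*}$ in gives $\|A\|_{\mathcal U^k}\ge|A|^{k+1-(2^k-k-1)c_*}$, which contradicts the hypothesis as soon as $(2^k-k-1)c_*<c$; so $c_*:=c/(2^k-k-1)$ works (up to a harmless constant).

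The main obstacle, and the step I would spend the most care on, is the exponent bookkeeping in \eqref{f:Gowers_char_AB_1}: getting the precise exponent $3+\tfrac1k$ on the left and verifying that the H\"older weights on the four quantities $\langle\cdot\rangle_{k-1}^2$, $\langle\cdot\rangle_{k+1}$, $\|f\|_{\mathcal U^k}^{1/k}$, $\|g\|_{\mathcal U^k}^{1/k}$ are simultaneously non-negative and sum correctly. Everything else — the downward iteration, the base cases, and the non-commutative tail argument via \eqref{f:Gowers_ind} — is routine dyadic pigeonholing and Cauchy--Schwarz of the kind already packaged in Lemma \ref{l:Gowers_char} and in \cite{s_energy}.
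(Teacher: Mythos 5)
Your outline diverges from the paper's proof in all three parts, and two of the divergences are genuine gaps.

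\textbf{On \eqref{f:Gowers_char_AB_1}.} You propose to obtain the inequality by a direct H\"older interpolation in $(\vec s_*,s_k)$. This is not what the paper does, and I do not see how it can produce the peculiar exponent $3+1/k$ and the $k$th--root factors $\|f\|_{\mathcal U^k}^{1/k}\|g\|_{\mathcal U^k}^{1/k}$. The paper's argument is a level--set/popularity argument followed by the Bollob\'as--Thomason projection inequality: one defines the set $Q$ of popular $\vec s$ (popularity measured against $\|f\|_{\mathcal U^k}\|g\|_{\mathcal U^k}$), and for each coordinate $j\in[k]$ a set $Q_j$ of popular $(\vec s_{*,j},t)$ (popularity measured against $\langle f,g\rangle_{k-1}$); the effective support $\mathcal S\subseteq\Gr^{k+1}$ is contained in the intersection of these $k+1$ cylinders, each omitting a different coordinate, and Bollob\'as--Thomason then gives $|\mathcal S|\le\bigl(|Q|\prod_j|Q_j|\bigr)^{1/k}$, which is exactly where $1/k$ comes from. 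After that one Cauchy--Schwarzes against $\langle f,g\rangle_{k+1}$ and tensors to remove the constant. This is a geometric (Loomis--Whitney type) ingredient, not a H\"older interpolation on a single sum, and without it the bookkeeping you say you would "spend the most care on" is not merely tedious but, I believe, cannot be closed.

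\textbf{On \eqref{f:Gowers_char_AB_2}.} The downward iteration you describe is roughly the paper's plan, and the upper bound $\beta\le 2^{-k+1}$ does fall out of the homogeneity identity \eqref{eq:homogeneity}. But the lower bound $\beta\ge 4^{-k}$ is not a one--line consequence of "$\beta$ gets multiplied by something between $1/4$ and $1/2$ at each step''; the paper sets up and solves the linear system $x_{j-1}+2x_{j+1}=\omega_jx_j$ with $\omega_j=3+1/j$, verifies $x_j>0$ throughout, and only then extracts $\beta=x_{k-1}/(4x_1-2x_2)\ge 4^{-k}$. Your heuristic needs to be replaced by that (or an equivalent) argument.

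\textbf{On the non--commutative tail.} Here the gap is a wrong claim. You assert that the second display of Lemma \ref{l:Gowers_char} "only used \eqref{f:Gowers_ind}, \eqref{f:Gowers_sums_A_s}, \eqref{f:Gowers_sq_A}, none of which require $\Gr$ abelian,'' so the commutative proof carries over verbatim. It does not: the proof of \eqref{f:Gowers_char_AB_1} (and hence of Lemma \ref{l:Gowers_char}) uses commutativity essentially, in the step where the bound on $|Q_1|$ is obtained "using the changing of the variables as in \eqref{f:comm_Uk}'' --- you need $u_jx=x\tilde u_j$ to push $s_1$ past $x$. Precisely because of this, the paper handles the general group case by a different mechanism: it assumes $\E(A)\ge|A|^3/K$, invokes the non--commutative Balog--Szemer\'edi--Gowers theorem to pass to $A_*\subseteq a^{-1}A$ of comparable size with $|A_*^3|\ll_K|A_*|$, and uses the small--tripling bound $|\mathcal S_k|\le|AA^{-1}|^{k+1}\ll_K|A|^{k+1}$ as a substitute for the projection estimate. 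The conclusion you want is correct, and it can in fact be reached by a purely elementary iteration in any group, namely $\|A\|_{\mathcal U^{m+1}}\ge\|A\|_{\mathcal U^m}^{3}\big/\bigl(\|A\|_{\mathcal U^{m-1}}^{2}|A^{-1}A|\bigr)$ for $m\ge 2$, obtained from \eqref{f:Gowers_ind}, \eqref{f:energy_CS} applied to the slices $A_{\vec s}$, the containment $A_{\vec s}^{-1}A_{\vec s}\subseteq A^{-1}A$, and H\"older --- but this is a different argument from the one you sketched, and it is not "exactly as in the commutative Lemma \ref{l:Gowers_char}.'' You should either run this recursion explicitly or follow the paper's BSG route; as written, your justification of the final sentence does not stand.
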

\begin{proof}
	We have 
\begin{equation}\label{tmp:17.12_1}
	\sigma:= \langle f, g \rangle_k = \sum_{\vec{s}, t} \sum_{x} f_{\vec{s}} (x) \overline{g_{\vec{s}} (xt)}
\end{equation}
	and our first task to estimate size of the set of $(\vec{s},t)$ in the last formula. 
	Basically, we consider two cases.
	If the summation in (\ref{tmp:17.12_1}) is taken over the set
\begin{equation}\label{tmp:Q_def}
	Q := \{ \vec{s} ~:~ \sum_t g_{\vec{s}} (t) \ge \sigma (2k \| f \|_{\mathcal{U}^{k}} )^{-1} \} \,,
%\,.
\end{equation}
then it gives us $(1-1/2k)$ proportion of $\sigma$.
	Cardinality of the set $Q$ can be estimated as 
\[
	|Q| \cdot \sigma (2k \| f \|_{\mathcal{U}^{k}} )^{-1} \le  \|g \|_{\mathcal{U}^{k}}
\]	
and hence $|Q| \le 2k \| f \|_{\mathcal{U}^{k}} \| g \|_{\mathcal{U}^{k}} \sigma^{-1}$. 
	Now we fix any $j\in [k]$ (without any loss of generality we can assume that $j=1$) put $\vec{s}_* = (s_2,\dots,s_{k})$  and consider 
\begin{equation}\label{tmp:Q_def'}
	Q_1 := \{ (\vec{s}_*, t) ~:~  \sum_{s_1} f_{\vec{s}_*} (s_1) \overline{g_{\vec{s}_*} (ts_1)} \ge \sigma (2k \langle f,g \rangle _{k-1} )^{-1} \} \,.
\end{equation}
	Using the changing of the variables as in \eqref{f:comm_Uk} (here we appeal to the  commutativity of the group $\Gr$) and applying the argument as above, we have 
\[
	|Q_1| \cdot \sigma (2k \langle f,g \rangle _{k-1} )^{-1} \le \sum_{\vec{s}_*, t} \sum_{s_1} f_{\vec{s}_*} (s_1) \overline{g_{\vec{s}_*} (ts_1)} 
%	=
%	\sum_{\vec{s}_*, t} \sum_{s_1} f_{\vec{s}_*} (x) \overline{g_{\vec{s}_*} (xt)}
	=
	 \langle f,g \rangle _{k-1} \,.
\]
	Again  if the summation in (\ref{tmp:17.12_1}) is taken over the set $Q_1$, then it gives us $(1-1/2k)$ proportion of $\sigma$.
Hence by the standard  projection results see, e.g., \cite {Bol_Th} we see that the summation
in (\ref{tmp:17.12_1}) is taken over a set $\mathcal{S}$ of vectors $(\vec{s},t)$ of size at most
$$
	|\mathcal{S}| \le ( (2k)^{k+1} \| f \|_{\mathcal{U}^{k}} \| g \|_{\mathcal{U}^{k}} \sigma^{-(k+1)} )^{1/k} \langle f,g \rangle^2_{k-1} \,.
$$
	Whence by the Cauchy--Schwartz inequality, we get
\[
	2^{-4}\sigma^2 \le |\mathcal{S}| \sum_{\vec{s}, t} \left| \sum_{x} f_{\vec{s}} (x) \overline{g_{\vec{s}} (xt)} \right|^2 
		\le 
			( (2k)^{k+1}  \| f \|_{\mathcal{U}^{k}} \| g \|_{\mathcal{U}^{k}} \sigma^{-(k+1)} )^{1/k}  \langle f,g \rangle^2_{k-1}  \langle f, g\rangle^{}_{k+1}
\]
	and we have \eqref{f:Gowers_char_AB_1} up to a constant depending on $k$. 
	Using the tensor trick (e.g., see, \cite{TV}) we obtain the result with the constant one.

	To 
	%obtain 
	prove inequality 
	\eqref{f:Gowers_char_AB_2} we see by induction and formula \eqref{f:Gowers_char_AB_1} that one has for $l\le k$ 
\[
%	\E(A,B) = 
	\langle A,B \rangle_l 
		\le 
			\langle A,B \rangle^{\a_0 (l,k)}_0 \prod_{j=1}^{k-1}  (\| A\|_{\mathcal{U}^j}  \| B\|_{\mathcal{U}^j})^{\a_j(l,k)} \cdot \langle A,B \rangle^{\beta_k(l,k)}_k \,,
\]
	where $\a_j (l,k)$, $\beta_j (l,k)$ are some non--negative functions.
	In principle, in view of \eqref{f:Gowers_char_AB_1} these functions can be calculated via some recurrences but we restrict ourselves giving  just crude bounds for them. 
	We are interested in 
	%the case 
	 $l=1$ and $k$ is a fixed number  and hence we write  
\[
\E(A,B) = 
	\langle A,B \rangle_1 
	\le 
\langle A,B \rangle^{\a_0}_0 \prod_{j=1}^{k-1}  (\| A\|_{\mathcal{U}^j}  \| B\|_{\mathcal{U}^j})^{\a_j} \cdot  \langle A,B \rangle^{\beta}_k \,.
\]	
	By homogeneity, we get
\begin{equation}\label{eq:homogeneity}
	2= \a_0 + \sum_{j=1}^{k-1} \a_j 2^j + 2^k \beta \,.
\end{equation}
	In particular, $\beta \le 2^{-k+1}$. 
	Further taking $A=B$ equals a subgroup, we obtain one more equation 
\begin{equation}\label{eq:energy}
	3= 2\a_0 + 2\sum_{j=1}^{k-1} \a_j (j+1) + (k+2) \beta \,.
\end{equation}
	Using trivial inequalities $\|A\|_{\mathcal{U}^j} \le |A|^{j+1}$, $\|B\|_{\mathcal{U}^j} \le |B|^{j+1}$  and formula \eqref{eq:energy}, we derive
\[
	\E(A,B) \le (|A| |B|)^{\a_0 + \sum_{j=1}^{k-1} \a_j (j+1)} \langle A,B \rangle^{\beta}_k 
	= (|A||B|)^{\frac{3}{2} - \frac{\beta(k+2)}{2}}  \langle A,B \rangle^{\beta}_k  
	%\,.
\]
	as required. 
	Actually, if there is a non-trivial upper bound for $\|A\|_{\mathcal{U}^j}$ (and it will be so in the next section), then the last estimate can be improved in view of Lemma \ref{l:Gowers_char}.
	Further our task is to obtain a good lower bound for $\beta$. 
	Put $\omega_j := 3+1/j > 3$, $j\in [k-1]$. 
	Using \eqref{f:Gowers_char_AB_1}, we get
\begin{equation}\label{f:prod_xj}
	\prod_{j=1}^{k-1} \langle A, B \rangle^{\o_j x_j}_j \le S \prod_{j=1}^{k-1} \left( \langle A, B \rangle^2_{j-1} \langle A, B \rangle_{j+1} \right)^{x_j} \,,
\end{equation} 
	where $S$ is  a quantity depending on $\|A\|_{\mathcal{U}^j}$, $\|B\|_{\mathcal{U}^j}$,  which we do not specify 
	and let $x_j$ be some positive  numbers, which we will choose (indirectly) later.
	For  $2 \le j \le k-2$ put 
	\begin{equation}\label{f:x_j}
		x_{j-1} + 2 x_{j+1} = \o_j x_{j}  \,.
	\end{equation}
	Then we obtain from \eqref{f:prod_xj} 
\[
	\langle A, B \rangle^{4x_1}_1 
		\langle A, B \rangle^{\omega_{k-1} x_{k-1}}_{k-1} 
			\le 
				S 
				\langle A, B \rangle^{2x_{2}}_1 \langle A, B \rangle^{x_{k-1}}_k \langle A, B \rangle^{x_{k-2}}_{k-1}	\,.			
\]
	Now choosing $x_{k-2} = \omega_{k-1} x_{k-1}$, we see that $\beta = x_{k-1}/(4x_1-2x_2)$ and it remains to estimate $x_{k-1}$ in terms of $x_1, x_2$. 
	But for  all $j$ one has $\o_j \le 4$, hence $x_{k-1} \ge 4^{-1} x_{k-2}$ and, similarly, from  
	$x_{j-1} + 2 x_{j+1} = \o_j x_{j}$,  $2 \le j \le k-2$, 
	%$x_j + 2 x_{j+2} = \o_j x_{j+1}$, $j\in [k-2]$, 
	we get 
	$x_{j} \ge 4^{-1} x_{j-1}$ and hence $x_{k-1} \ge 4^{-(k-1) }x_1$. 
	Further summing \eqref{f:x_j} over $2\le j \le k-2$ and putting $T=\sum_{j=1}^{k-1} x_j$, we obtain 
\[
	T-x_{k-1}-x_{k-2} + 2T - 2x_1 - 2x_2 = \sum_{j=2}^{k-2} \o_j x_j \ge  3T - 3x_1 -3x_{k-1}
\]
	and hence 
	$$
		x_1-2x_2 \ge x_{k-2} - 2x_{k-1} = (\o_{k-1} - 2) x_{k-1} > 0 \,.
	$$
	In particular, it gives $x_j >0$ for all $j\in [k-1]$ and thus indeed $\beta \ge 4^{-k}$.

	Now suppose that $\Gr$ is an arbitrary group and $\|A\|_{\mathcal{U}^k} \le |A|^{k+1-c}$ but $\E(A) \ge |A|^3/K$, where $K\ge 1$ is a parameter. 
	By the non--commutative  Balog--Szemer\'edi--Gowers Theorem, see \cite[Theorem 32]{Brendan_rich} or \cite[Proposition 2.43, Corollary 2.46]{TV}
	%	By Balog--Szemer\'edi--Gowers Theorem
	there is $a\in A$ and $A_* \subseteq a^{-1}A$, $|A_*|\gg_K |A|$ such that $|A^3_*| \ll_K |A_*|$.
	We can apply the previous argument to the set $A_*$ and obtain an estimate  similar to Lemma \ref{l:Gowers_char}
$$
	|A|^{k+1-c} \ge \| A \|_{\mathcal{U}^{k}} \ge \| A_* \|_{\mathcal{U}^{k}} \gg_K \E(A_*)^{2^{k-2}} |A_*|^{-(3\cdot 2^{k-2} -k -1)} 
	\gg_K 
$$
\begin{equation}\label{tmp:19.12_1} 
	\gg_K 
	\E(A)^{2^{k-2}} |A|^{-(3\cdot 2^{k-2} -k -1)}  \,.
\end{equation}
	Indeed, to bound  $\E(A)$ via $\| A\|_{\mathcal{U}^{k+2}}$ using the argument as in the proof above we need to estimate size of the set $\mathcal{S}_k$  at each step $k$. 
	But clearly $|\mathcal{S}_k| \le |AA^{-1}|^{k+1} \ll_K |A|^{k+1}$ and hence by induction we obtain $\E^{2^k}(A) \ll_K |A|^{3\cdot 2^k-k-3} \| A\|_{\mathcal{U}^{k+2}}$ as required.
	Finally, from \eqref{tmp:19.12_1}, it follows that $K^{C(k)}\gg |A|^{c}$, where $C(k)$ is a constant depending on $k$ only.
This completes the proof.
$\hfill\Box$
\end{proof}

\bigskip

A closer look to the proof (see, e.g., definition \eqref{tmp:Q_def'}) shows that 
%for $k=2$ Lemma \ref{l:Gowers_char} (and moreover estimate \eqref{f:Gowers_char_AB_1} for $k=1$ and for any  class--functions $f$, $g$) takes place for a normal set $A$ in any group $\Gr$. 
for $k=1$  estimate \eqref{f:Gowers_char_AB_1} of  Lemma \ref{l:Gowers_char} takes place for any  class--functions $f$, $g$. 
Nevertheless,  
%for, say, $k>2$ 
for larger $k$ this argument does not work.

\begin{comment}
The 
%following 
next 
lemma is rather standard and follows from the ordinary machinery of \cite{BG_SL} (see details in \cite[Sections 8,10]{sh_as}).

\begin{lemma}
	Let $K\ge 1$ be a parameter, $\Gr$ be a finite group and $A\subseteq \Gr$ be a set such that for any $g\in A$ and an arbitrary $A' \subseteq g^{-1} A$
	with $|A'|\gg |A|/K^{O(1)}$   
	one has $|(A')^3| \gg K^{O(1)} |A'|$. 
	Put $K_* := \min\{ K, d_{\min} (\Gr) \}$.
	Then 
\[
	\T_k (A) - \frac{|A|^{2k}}{|\Gr|} \le \frac{|A|^{2k}}{K^{O(\log k)}_*} \,.
\]
%
%	for any non--trivial irreducible representation $\rho$ the following holds
%\[
%	\| \FF{A} (\rho) \|_o \ll |A|^{1-c} \,. 
%\]
\label{l:Bourgain_representation}
\end{lemma} 
\end{comment}

\section{The proof of the main result}
\label{sec:proof}

Let $\Gr$ be an algebraic group in an affine or projective space of dimension $n$ over the  field $\F_q$, and let  $V\subseteq \Gr$ be a variety, $d=\dim(V)$, $D=\deg (V)$.  
If $V$ is absolutely irreducible, then by Lang--Weil  \cite{LW}  we know that
\begin{equation}\label{f:LW}
	\left| |V| - q^d \right| \le (d-1)(d-2) q^{d-1/2} + A(n,d,D) q^{d-1} \,,
\end{equation}
where $A(n,d,D)$ is a  certain constant. 
By sufficiently large $q$ we mean that $q\ge q_0 (n,d,D,$
$\dim(\Gr),\deg(\Gr))$ and all constants below are assumed to depend on $n,d,D,\dim(\Gr),\deg(\Gr)$.  
In particular, for an  absolutely irreducible variety $V$ one has $q^d \ll |V| \ll q^d$.
One can think about $\Gr$ and $V$ as varieties defined over $\mathbb{Q}$ by absolutely irreducible polynomials.
Then by the Noether Theorem \cite{Noether} we know that $\Gr$ and $V$ reduce 
%$\mod p$ 
$\mathrm{mod~} p$ 
to some absolutely irreducible varieties  defined over $p$, $p\notin S(\Gr,V)$, where $S(\Gr,V)$ is a certain finite set of the primes.
%  outside a finite set of primes. 
%$f\in \mathbb{Q}[\Gr]$ ???

Finally, for any set $W \subseteq \Gr$ consider the quantity
\begin{equation}\label{def:t}
t = t(W) := \max_{x\in \Gr,\, \G \le \Gr}\, \{ |\G| ~:~ x\G \subseteq W,\,\, \G \mbox{ is an algebraic subgroup} \} \,.
\end{equation}

Now we are ready to estimate different energies of varieties in terms of the quantity $t(V)$. 
We are also able to give a non--trivial bound for any sufficiently large subset of $V$, see inequality \eqref{f:A_in_V_bound}. 

\begin{theorem}
	Let $\Gr$ be an algebraic group, $V\subseteq \Gr$ be
	% an  absolutely irreducible 
	a 
	variety, $d=\dim(V)$, $D=\deg (V)$, $t=t(V)$.   
	Then for any positive integer $k$ and all sufficiently large $q$ one has 
\begin{equation}\label{f:E_k_variety}
	\E_k (V) \ll_{d,D} \frac{|V|^{k+1}}{q^{k-1}} + t |V|^k \,.
\end{equation}
	In particular, if $V$ is absolutely irreducible and $V$ is not a coset of a subgroup, then 
	%for $k\le d+1$ one has  
	$\E_k (V) \ll_{d,D} |V|^{k+1 - \frac{1}{d}}$.\\   
	Similarly,  one has
\begin{equation}\label{f:U_k_variety}
	\| V\|_{\mathcal{U}^{k}} \ll_{d,D} |V|^{k+1} q^{-\frac{k(k-1)}{2}} + |V|^2 t^{k-1} + |V|^2 \sum_{j=1}^{k-2} |V|^{j} t^{k-1-j} q^{-\frac{j(1+j)}{2}} \,,
\end{equation}
	and for any $A\subseteq V$ the following holds 
\begin{equation}\label{f:A_in_V_bound}
	\| A\|_{\mathcal{U}^{d+1}} \ll_{d,D} t|A|^{d+1} \,.
\end{equation}
\label{t:E_k_variety}
\end{theorem}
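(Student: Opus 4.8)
The plan is to prove the three estimates of Theorem~\ref{t:E_k_variety} by a common mechanism: count the relevant tuples geometrically, splitting according to whether the "shift data'' forces a large intersection of $V$ with one of its translates (hence involves a coset of a subgroup, contributing via $t$) or not (hence the intersection is a lower-dimensional variety, whose size we control by Lang--Weil and B\'ezout). I would start with \eqref{f:E_k_variety}. Write $\E_k(V) = \sum_{\vec s} |V_{\vec s}|$ in the $\mathcal U^1$-style notation, or more directly: $\E_k(V)$ counts $(a_1,\dots,a_k,b_1,\dots,b_k)\in V^{2k}$ with $a_1^{-1}b_1 = \dots = a_k^{-1}b_k =: s$. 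For fixed $s$, the number of such tuples is $|V\cap Vs^{-1}|^k$ (thinking multiplicatively, $a_i\in V$ and $b_i = a_i s \in V$). So $\E_k(V) = \sum_{s\in\Gr} |V\cap Vs^{-1}|^k$. Now split the sum over $s$ into those $s$ with $|V\cap Vs^{-1}| \ge M$ (for a threshold $M$ to be optimised, morally $M\asymp q^{d-1}$) and the rest. On the small side, $\sum_s |V\cap Vs^{-1}|^k \le M^{k-1}\sum_s |V\cap Vs^{-1}| = M^{k-1}|V|^2$, giving the first term after choosing $M\asymp |V|/q \asymp q^{d-1}$. On the large side I need the key structural input: the set $\{s : |V\cap Vs^{-1}| \ge M\}$, for $M$ a sufficiently large multiple of $q^{d-1}$, is contained in (finitely many cosets of) an algebraic subgroup of size $O(t)$, or more precisely contributes at most $t$ values of $s$ each giving $|V\cap Vs^{-1}|^k \le |V|^k$. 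This is where Lang--Weil plus B\'ezout enter: $V\cap Vs^{-1}$ is a subvariety of $V$; if it has dimension $d$ it equals $V$ (as $V$ is irreducible, up to the reducible bookkeeping via $\deg$), i.e.\ $Vs^{-1} \supseteq V$, i.e.\ $Vs = V$, and the stabiliser $\{s : Vs = V\}$ is an algebraic subgroup whose left-coset structure one relates to $t(V)$; if $\dim(V\cap Vs^{-1}) \le d-1$ then $|V\cap Vs^{-1}| \ll q^{d-1} \le M$ provided $M$ is chosen a large enough constant times $q^{d-1}$. So the threshold argument bites exactly at the Lang--Weil error scale.

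Next I would do \eqref{f:U_k_variety} by iterating the same idea along the induction property \eqref{f:Gowers_ind}: $\|V\|_{\mathcal U^{k}} = \sum_{s\in V^{-1}V} \|V_s\|_{\mathcal U^{k-1}}$, where $V_s = V\cap Vs^{-1}$. Again split $s$ according to $\dim(V_s)$: for the generic $s$, $V_s$ is a variety of dimension $\le d-1$ and degree $\le D^2$ (B\'ezout), so I recurse with $d$ replaced by $d-1$; for the at most $O(t)$-many special $s$, $V_s$ has full dimension $d$ and I bound $\|V_s\|_{\mathcal U^{k-1}} \le \|V\|_{\mathcal U^{k-1}} \le |V|^{k}$ crudely (or keep a $t$ from the next level). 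Unwinding this recursion, each "drop in dimension'' step costs a factor of roughly $q$ (the Lang--Weil gap $q^{d}$ vs $q^{d-1}$), producing the $q^{-j(j+1)/2}$ weights as the dimension falls by $1,2,\dots$, while each time we instead sit on a special $s$ we pick up a factor $t$ and lose one further level — this is exactly the shape of the three groups of terms in \eqref{f:U_k_variety}: the all-generic branch $|V|^{k+1} q^{-k(k-1)/2}$, the all-special branch $|V|^2 t^{k-1}$, and the mixed branches $|V|^{j+2} t^{k-1-j} q^{-j(j+1)/2}$. I'd set this up as a clean induction on $k$ (or on $d$), with base case $\|V\|_{\mathcal U^1} = |V|^2$.

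Finally \eqref{f:A_in_V_bound}: for a subset $A\subseteq V$, I use \eqref{f:Gowers_ind} once more, $\|A\|_{\mathcal U^{d+1}} = \sum_{s\in A^{-1}A}\|A_s\|_{\mathcal U^{d}}$ with $A_s = A\cap As^{-1} \subseteq V\cap Vs^{-1}$. The point of stopping at level $d+1$ is that $A_s$ lies in a variety of dimension $\le d-1$ for all but the special $s$ (those with $Vs = V$, of which there are $O(t)$), and one can then induct \emph{downward on the ambient dimension}: a subset of a $(d-1)$-dimensional variety has $\|\cdot\|_{\mathcal U^{d}}$ bounded by $t'$ times its cube, and crucially the relevant $t'$ for these lower-dimensional slices is still $\le t(V)$ since any coset of a subgroup inside $V_s\subseteq V$ is a coset of a subgroup inside $V$. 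Chaining this, every generic branch terminates (a subset of a $0$-dimensional variety is a point) contributing nothing beyond $|A|\cdot(\text{stuff})$, and the only surviving contributions carry exactly one factor $t$, giving $\|A\|_{\mathcal U^{d+1}}\ll_{d,D} t|A|^{d+1}$. The main obstacle, and the step I would spend the most care on, is the structural dichotomy: making precise that the set of $s$ for which $V\cap Vs^{-1}$ fails to drop in dimension is governed by an algebraic subgroup of size $O(t(V))$ — handling the reducible components correctly, controlling the degrees of all the intersection varieties uniformly via \eqref{f:deg_intersection} so that Lang--Weil \eqref{f:LW} applies with constants depending only on $n,d,D,\dim(\Gr),\deg(\Gr)$, and checking that the number of "bad'' cosets is itself $O(1)$ so that the bad contribution is genuinely $O(t)$ rather than $O(t\cdot q^{\text{something}})$. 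Everything else is bookkeeping of the geometric series in $q^{-1}$ and $t$.
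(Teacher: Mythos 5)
Your proposal is correct and follows essentially the same approach as the paper: the same structural dichotomy (for each shift $g$ either $\dim(V\cap gV)$ drops, giving $|V\cap gV|\ll q^{d-1}$ by Lang--Weil and B\'ezout, or $g$ lies in one of $O_{D}(1)$ cosets of stabilizers of irreducible components, each of size $\le t$), the same threshold cut at scale $q^{d-1}$, and the same iteration of this dichotomy across the Gowers coordinates. The only cosmetic difference is the bookkeeping of the recursion: the paper introduces $\E_k^{(l)}(V)=\sum_{\vec s}|V_{\vec s}|^k$ and recurses on the slicing depth $l$ (raising the energy exponent $k$ at each generic step), whereas you recurse on the Gowers level via \eqref{f:Gowers_ind} and on the ambient dimension of $V_s$; both yield the same three families of terms in \eqref{f:U_k_variety} and the same final bound $t|A|^{d+1}$ at level $d+1$.
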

\begin{proof} 
	First of all, consider the case of an absolutely irreducible $V$. 
	For any $g\in \Gr$ we have either $\dim(V \cap gV) < \dim (V)$, or $g$ belongs to the stabilizer $\Stab(V)$ of $V$. 
	It is well--known that any stabilizer under any action of an algebraic group is an algebraic subgroup (but not necessary irreducible). 
	Clearly, $\Stab(V) \subseteq v^{-1}V$ for any $v\in V$ and hence either $V$ is a coset of an (algebraic) subgroup, or 
	$\dim (\Stab(V)) < \dim (V)$.
	The degree of the variety $gV \cap V$ is at most $\deg^2 (V)$ by inequality \eqref{f:deg_intersection}.
	Similarly, since our topological space is a Noetherian one (see, e.g., \cite[page 5]{Hartshorne}) and $\Stab(V) = \bigcap_{v\in V} v^{-1}V$, it follows that cardinality of $\Stab(V)$ can be estimated in terms of $d$ and $D$ thanks to \eqref{f:LW}. 
	Hence all parameters of all appeared varieties are controlled by $d,D$, $n$ (and, possibly, by $\dim(\Gr), \deg(\Gr))$.   
	Using Lang--Weil  formula \eqref{f:LW}, we obtain for sufficiently large $q$ that  $q^d/2 \le |V| \le 2q^d$, say, further $|\Stab(V)| \ll q^{d-1}$, and, similarly, 
	for any $g\notin \Stab(V)$ one has $|V \cap gV| \ll q^{d-1}$. 
	Hence
\begin{equation}\label{f:irr_1}
	\E_k (V) = \sum_{g\in \Gr} |V\cap gV|^k = \sum_{g\notin \Stab(V)} |V\cap gV|^k + \sum_{g\in \Stab(V)} |V\cap gV|^k 
		\ll 
\end{equation}
\begin{equation}\label{f:irr_2}
		\ll 
		(q^{d-1})^{k-1} \sum_{g\in \Gr} |V\cap gV| + q^{d-1} |V|^k \ll (q^{d-1})^{k-1} |V|^2 + q^{d-1} |V|^k
		\ll
		|V|^{k+1 - \frac{1}{d}} \,.
\end{equation}

	 Now to obtain \eqref{f:E_k_variety} we apply the same argument but before we need to consider $V$ as a union of its irreducible components $V=\bigcup_{j=1}^s V_j$.
	 Clearly, $s\le \deg (V)$. 
	 Take any $g\in \Gr$ and consider $V\cap gV = \bigcup_{i,j=1}^s (V_i \cap g V_j)$.
	 If for all $i,j\in [s]$ one has $\dim(V_i \cap g V_j) < \dim V$, then for such $g$ we can repeat the previous calculations in \eqref{f:irr_1}--\eqref{f:irr_2}.
	 Consider the set of the remaining $g$ and denote this set by $B$. 
	 For any $g\in B$  there is $i,j\in [s]$ such that $\dim(V_i \cap g V_j) = \dim(V)$.
	 In particular, $\dim(V_i) = \dim(V_j) = \dim(V)$ and $V_i \cap g V_j = V_i = gV_j$ by irreducibility of $V_i,V_j$. 
	 Suppose that for the same pair $(i,j)$ there is another $g_* = g_* (i,j) \in B$ such that $g_* V_j = V_i$.
	 Then $g^{-1}_* g \in \Stab(V_j)$. 
	 It follows that $g\in g_* \Stab(V_j)$ and hence the set $B$ belongs to $\bigcup_{i,j=1}^s g_* (i,j) \Stab(V_j)$ plus at most $s^2 \le \deg^2 (V)$ points. 
	Hence we need to add to  the computations in \eqref{f:irr_1}--\eqref{f:irr_2} the term
\[
	s^2 (t+1) |V|^k  \le \deg(V)^2 (t+1) |V|^k \ll t |V|^k
\]
	as required.

	To prove 
	%the last bound 
	bound \eqref{f:U_k_variety} 
	let us obtain a generalization of \eqref{f:E_k_variety}. 
	Put $\E^{(l)}_k = \E^{(l)}_k (V) := \sum_{\vec{s}} |V_{\vec{s}}|^k$, where $\vec{s} = (s_1,\dots,s_l)$ and $V_{\vec{s}}$ as in \eqref{def:A_vec{s}}.
	Write $\vec{s} = (\vec{s}_*,s_l)$.
	Since $\E^{(l)}_k = \sum_{\vec{s}_*} \E_k (V_{\vec{s}_*})$, it follows that by the obtained estimate \eqref{f:E_k_variety}
\begin{equation}\label{f:E_kl-}
	\E^{(l)}_k \ll \sum_{\vec{s}_*} \left( \frac{|V_{\vec{s}_*}|^{k+1}}{q^{k-1}} + t (V_{\vec{s}_*}) |V_{\vec{s}_*}|^{k} \right)
		 \ll q^{-(k-1)} \E^{(l-1)}_{k+1} + t \E^{(l-1)}_{k} \,.
\end{equation}
	Here we have used the fact that the function $t$ on a subset of $V$ does not exceed $t(V)$.
	Further inequality \eqref{f:E_kl-} gives  by induction 
\begin{equation}\label{f:E_kl}
	\E^{(l)}_k \ll |V|^k \sum_{j=0}^l q^{-\frac{j(2k+j-3)}{2}} |V|^j t^{l-j} \,.
\end{equation}
	Now, applying inequality \eqref{f:E_k_variety} with the parameter $k=2$ and using the notation $\vec{s} = (\vec{s}_*,s_l)$ again, we get 
\[
	\| V\|_{\mathcal{U}^{l+1}} = \sum_{\vec{s}} |V_{\vec{s}}|^2 = \sum_{\vec{s}_*} \E (V_{\vec{s}_*}) 
		\ll 
			\sum_{\vec{s}_*} \left( \frac{|V_{\vec{s}_*}|^3}{q} + t |V_{\vec{s}_*}|^2 \right)
	=
	q^{-1} \E^{(l-1)}_3 + t \E^{(l-1)}_2.
\]
	Hence in view of \eqref{f:E_kl}, we derive
\[
	\| V\|_{\mathcal{U}^{l+1}} \ll |V|^2 \sum_{j=0}^{l-1} |V|^{j} t^{l-1-j} (|V| q^{-\frac{j(3+j)}{2}-1} + tq^{-\frac{j(1+j)}{2}}) 
	\ll
\]
\[
	\ll
	|V|^2 t^l + |V|^{l+2} q^{-\frac{l^2+l}{2}} + |V|^2 \sum_{j=1}^{l-1} |V|^{j} t^{l-j} q^{-\frac{j(1+j)}{2}} \,.
\]
	
	Finally, take any $A\subseteq V$.
	As above put  $\vec{s} = (s_1,\dots,s_{d}) = (\vec{s}_*, s_d)$.
	We have $A_{\vec{s}} \subseteq V_{\vec{s}}$.
	%One has 
	Further by \eqref{f:A,B_scalar_exm} 
\begin{equation}\label{tmp:02.12_1} 
	\| A \|_{\mathcal{U}^{d+1}} = \sum_{\vec{s}} |A_{\vec{s}}|^2 = \sum_{\vec{s}_*} \E (A_{\vec{s}_*}) = \sum_{\vec{s}_*} \sum_{s_d} |A_{\vec{s}_*} \cap A_{\vec{s}_*} s_d|^2 \,.
\end{equation}
	Take any vector $\vec{z} = (z_1,\dots, z_{l})$, $l< d$ and consider  the decomposition of the variable $V_{\vec{z}}$  onto irreducible components 
	$V_{\vec{z}} (j)$.
	As above define the set $B(\vec{z})$  of all $g\in \Gr$ such that there are $V_{\vec{z}} (i), V_{\vec{z}} (j)$ with $V_{\vec{z}} (i) =  gV_{\vec{z}} (j)$.
	Then by the arguments as before, we have $|B(\vec{z})| \ll t$. 
	Using formula \eqref{tmp:02.12_1}, we get 
\begin{equation}\label{tmp:very_end-}
	\| A \|_{\mathcal{U}^{d+1}} \ll t \sum_{\vec{s}_*} |A_{\vec{s}_*}|^2  + \sigma = t \| A \|_{\mathcal{U}^{d}} + \sigma \,,
\end{equation}
	where for all $\vec{s}$ in $\sigma$, we have $\dim (V_{\vec{s}}) = 0$.
	Hence
\begin{equation}\label{tmp:very_end}
	\| A \|_{\mathcal{U}^{d+1}} \ll t \| A \|_{\mathcal{U}^{d}} + \sum_{\vec{s}} |A_{\vec{s}}| \ll t |A|^{d+1} + |A|^{d+1} \ll t |A|^{d+1} \,. 
\end{equation}	
This completes the proof.
$\hfill\Box$
\end{proof}

\bigskip

Once again the bounds above depend on $d,D$, as well as on $\dim(\Gr)$, $\deg(\Gr)$ and on dimension of the ground affine space.

\begin{remark}
	The quantity $\| V\|_{\mathcal{U}^{k}}$ can be written in different ways as $\sum_{s_1,\dots,s_k} |V_{s_1,\dots,s_k}|^2$,\\  
	$\sum_{s_1,\dots,s_{k-1}} \E (V_{s_1,\dots,s_{k-1}})$ and so on. 
	Taking variables $s_j$ running over  the maximal coset belonging to $V$ we see that all  terms with $t$ in \eqref{f:U_k_variety} are needed.    
\end{remark}

\begin{corollary}
	Let $\Gr$ be an abelian  algebraic group, $V\subseteq \Gr$ be 
	%an  absolutely irreducible 
	a 
	variety, $d=\dim(V)$, $D=\deg (V)$.  
	Then for all sufficiently large $q$ and any $A\subseteq V$ one has 
\begin{equation}\label{f:E(A)_in_V}
	\E(A) \ll_{d,D}  |A|^3 \left(\frac{t}{|A|} \right)^{(2^{d+1}-d-5)^{-1}} \mbox{ for   } d\ge 2 \quad \mbox{and} \quad \E(A) \ll_{d,D}  |A|^2 t \,,\mbox{   for } d=1 \,.
\end{equation}
	In particular, for any $A\subseteq V$ with $|A| \ge t^{1+c}$, $c>0$ there is $\delta=\delta(d,c)>0$ such that 
\begin{equation}\label{f:E(A)_3-d} 
	\E(A) \ll_{d,D} |A|^{3-\delta} \,.
\end{equation}
	Bound \eqref{f:E(A)_3-d} takes place in any algebraic group.\\ 
 	Moreover, let $B\subseteq \Gr$ be an arbitrary set.
 %	, $|B| \ge |A|$. 
 	Then  
 \begin{equation}\label{f:E(A,B)_in_V}
 	\E(A,B) \ll_{d,D}  \left( \frac{t}{|A|} \right)^{\beta} \cdot  |A|^{\frac{3}{2} - \frac{\beta d}{2}} |B|^{\frac{3}{2} + \frac{\beta d}{2}} \,,
 \end{equation}
 	where $\beta = \beta(d) \in [4^{-d}, 2^{-d+1}]$, 
	and for any $k\ge 1$ one has either $\T_k (A) \le |A|^{2k-1-c\beta/4}$ or 
\begin{equation}\label{f:E(A)_in_V2}
	\T_{k+1} (A) \ll_{d,D}  |A|^{2}  \T_k (A) \cdot |A|^{-c\beta/4} \,. 
	%\left( \frac{|A|}{t} \right)^{-\beta/2} \,.
\end{equation}
\label{cor:E(A)_in_V}
\end{corollary}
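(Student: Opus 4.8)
The plan is to deduce the whole corollary from the single substantive input \eqref{f:A_in_V_bound} of Theorem~\ref{t:E_k_variety}, namely $\|A\|_{\mathcal{U}^{d+1}}\ll_{d,D}t|A|^{d+1}$, combined with the abstract inequalities of Lemmas~\ref{l:Gowers_char} and \ref{l:Gowers_char_AB} relating $\|A\|_{\mathcal{U}^{k}}$, $\langle A,B\rangle_k$ and the energies. Essentially no new geometry is needed, except that the variety--intersection argument from the proof of Theorem~\ref{t:E_k_variety} has to be run once more for $\langle A,B\rangle_d$.

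\emph{Bounds on $\E(A)$.} For $d=1$ the estimate $\E(A)=\|A\|_{\mathcal{U}^{2}}\ll_{d,D}t|A|^{2}$ is \eqref{f:A_in_V_bound} itself. For $d\ge 2$ I would feed $\|A\|_{\mathcal{U}^{d+1}}\ll_{d,D}t|A|^{d+1}$ into the descent of Lemma~\ref{l:Gowers_char}, i.e.\ iterate $\|A\|_{\mathcal{U}^{k+1}}\ge\|A\|^{(3k-2)/(k-1)}_{\mathcal{U}^{k}}\|A\|^{-2k/(k-1)}_{\mathcal{U}^{k-1}}$ down to $k=2$ (equivalently use the stated consequence $\|A\|_{\mathcal{U}^{k}}\ge\E(A)^{2^{k}-k-1}|A|^{-(3\cdot2^{k}-4k-4)}$) with $k=d+1$; solving the resulting inequality $\E(A)^{\Theta(2^{d})}\ll_{d,D}t\,|A|^{\Theta(2^{d})}$ for $\E(A)$, and using that homogeneity (equality for a coset, where $\E=|A|^{3}$) forces the $t$--free part of the exponent to be exactly $3$, yields \eqref{f:E(A)_in_V} with exponent $1/(2^{d+1}-d-5)$. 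Then \eqref{f:E(A)_3-d} is immediate, since $|A|\ge t^{1+c}$ gives $t/|A|\le|A|^{-c/(1+c)}$. For the final clause, valid in an arbitrary (not necessarily abelian) algebraic group, one cannot invoke Lemma~\ref{l:Gowers_char}; instead feed \eqref{f:A_in_V_bound}, which holds for every algebraic group, into the last assertion of Lemma~\ref{l:Gowers_char_AB}: from $|A|\ge t^{1+c}$ one gets $\|A\|_{\mathcal{U}^{d+1}}\ll_{d,D}t|A|^{d+1}\le|A|^{(d+2)-c'}$ for some $c'=c'(c)>0$ (absorbing the implied constant costs only an arbitrarily small extra loss in $c'$ once $|A|$ is large, the case of bounded $|A|$ being trivial), and then Lemma~\ref{l:Gowers_char_AB} with $k=d+1$ produces $\E(A)\le|A|^{3-c_*}$.

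\emph{The bound on $\E(A,B)$ and the $\T_k$--dichotomy.} For \eqref{f:E(A,B)_in_V} I would apply \eqref{f:Gowers_char_AB_2} with $k=d$, giving $\E(A,B)\le(|A||B|)^{3/2-\beta(d+2)/2}\langle A,B\rangle_d^{\beta}$ with $\beta=\beta(d)\in[4^{-d},2^{-d+1}]$, and bound $\langle A,B\rangle_d=\sum_{\vec{s}}|A_{\vec{s}}||B_{\vec{s}}|$ by rerunning the proof of \eqref{f:A_in_V_bound}: split according to whether $\dim V_{\vec{s}}=0$ or $\ge 1$; the first part is $\ll_{d,D}\sum_{\vec{s}}|B_{\vec{s}}|=\|B\|_{\mathcal{U}^{d}}\le|B|^{d+1}$ since then $|A_{\vec{s}}|\ll_{d,D}1$, and the second, after peeling off the last variable and using that each $V_{\vec{s}_*}$ has $O_{d,D}(t)$ bad directions, is $\ll_{d,D}t\,\langle A,B\rangle_{d-1}$, which one bounds trivially; all told $\langle A,B\rangle_d\ll_{d,D}t|B|^{d+1}$, and collecting exponents gives \eqref{f:E(A,B)_in_V} verbatim (taking $B=A$ recovers $\E(A)\ll_{d,D}|A|^{3}(t/|A|)^{\beta}$, consistent with $\langle A,A\rangle_d=\|A\|_{\mathcal{U}^{d+1}}$). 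For \eqref{f:E(A)_in_V2}: since $\Gr$ is abelian, $\T_k(A)=|\Gr|^{-1}\sum_{\xi}|\FF{A}(\xi)|^{2k}$, whence the elementary splitting $\T_{k+1}(A)\le|\Gr|^{-1}|A|^{2k+2}+(\max_{\xi\neq 1}|\FF{A}(\xi)|^{2})\,\T_k(A)$. The energy bound of the previous step forces $|\Gr|\gg|A|^{1+\delta}$ (since $\E(A)\ge|A|^{4}/|\Gr|$), so the principal term is negligible, and the same bound controls the non--principal Fourier coefficients; tracking constants gives the stated alternative: either $\T_k(A)\le|A|^{2k-1-c\beta/4}$, or the displayed inequality collapses to $\T_{k+1}(A)\ll_{d,D}|A|^{2}\T_k(A)|A|^{-c\beta/4}$ --- and the latter, fed the trivial $\T_k(A)\le|A|^{2k-1}$, reproduces the first alternative at the next level, so the dichotomy runs an induction.

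\emph{The main obstacle.} As \eqref{f:A_in_V_bound} is already established, the corollary is mostly bookkeeping; the two places where care is genuinely required are (i) keeping exact track of the doubly--exponential--in--$d$ exponents through the Gowers--norm descent, so that the $t$--free exponent comes out as $3$, and (ii) the bound $\langle A,B\rangle_d\ll_{d,D}t|B|^{d+1}$, which must be uniform over a structureless set $B$ --- this forces one to adapt the variety argument of Theorem~\ref{t:E_k_variety} with $B$ occupying one of the two slots, rather than quoting \eqref{f:A_in_V_bound} as a black box.
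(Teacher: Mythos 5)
Your proof of the first three assertions of the corollary follows the paper's route essentially verbatim: for $d=1$ you use $\E(A)=\|A\|_{\mathcal{U}^{2}}\ll_{d,D}t|A|^{2}$; for $d\ge2$ you feed $\|A\|_{\mathcal{U}^{d+1}}\ll t|A|^{d+1}$ into the second part of Lemma~\ref{l:Gowers_char} with $k=d+1$; the general--group statement comes from the last clause of Lemma~\ref{l:Gowers_char_AB}; and for \eqref{f:E(A,B)_in_V} you bound $\langle A,B\rangle_{d}\ll\|B\|_{\mathcal{U}^{d}}+t\langle A,B\rangle_{d-1}\ll t|B|^{d+1}$ (for $|A|\le|B|$) and plug into \eqref{f:Gowers_char_AB_2}. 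All of this is the paper's argument.

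The place where you diverge is the derivation of the dichotomy \eqref{f:E(A)_in_V2}, and there your argument has a real gap. You write $\T_{k+1}(A)\le|\Gr|^{-1}|A|^{2k+2}+\bigl(\max_{\xi\neq0}|\FF{A}(\xi)|^{2}\bigr)\T_{k}(A)$ and then assert that ``the same bound controls the non--principal Fourier coefficients.'' It does not: $\E(A)\ll|A|^{3-\delta}$ is equivalent to $|\Gr|^{-1}\sum_{\xi}|\FF{A}(\xi)|^{4}\ll|A|^{3-\delta}$, which is an $\ell^{4}$--bound on $\FF{A}$ and yields only $\max_{\xi}|\FF{A}(\xi)|^{4}\ll|\Gr|\,|A|^{3-\delta}$; since $|\Gr|$ can be much larger than $|A|^{1+\delta}$, this gives nothing beyond the trivial $|\FF{A}(\xi)|\le|A|$. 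Indeed, the hypothesis $|A|\ge t^{1+c}$ does not forbid $A$ from being concentrated inside a coset of a hyperplane $\ker\xi$ (the coset need not lie in $V$), and in that case $|\FF{A}(\xi)|$ is comparable to $|A|$. Even the principal term is not obviously dominated: under the hypothesis $\T_{k}(A)>|A|^{2k-1-c\beta/4}$ one needs $|\Gr|\gg|A|^{1+c\beta/2}$ to absorb $|\Gr|^{-1}|A|^{2k+2}$, while the energy bound only guarantees $|\Gr|\gg|A|^{1+\delta}$ with a $\delta$ that is not in general $\ge c\beta/2$ (the exponent $\delta$ is governed by $\gamma=(2^{d+1}-d-5)^{-1}$, whereas $\beta$ is the different exponent of Lemma~\ref{l:Gowers_char_AB}, and $\gamma<\beta$ can occur).

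The paper avoids both problems by a completely different route: it uses that $\T_{k}^{1/2k}$ is a norm and dyadic pigeonholing on the level sets of $A^{(k)}$ to produce a single set $P=\{x:\Delta<A^{(k)}(x)\le2\Delta\}$ with $\T_{k+1}(A)\lesssim\Delta^{2}\E(A,P)$, and then applies \eqref{f:E(A,B)_in_V} with $B=P$ together with the elementary inequalities $\Delta^{2}|P|\le\T_{k}(A)$ and $\Delta|P|\le|A|^{k}$. This never needs an $L^{\infty}$ control on $\FF{A}$, and the zero--frequency term never appears because the bilinear quantity $\E(A,P)$ already contains it. You should replace the $\ell^{\infty}$--splitting by this level--set argument.
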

\begin{proof} 
	Let $d\ge 2$. 
	By Theorem \ref{t:E_k_variety}, we have $\| A\|_{\mathcal{U}^{d+1}} \ll t |A|^{d+1}$. 
	Using the second part of Lemma \ref{l:Gowers_char} with $k=d+1$, we obtain 
\[
	\E(A) \ll |A|^3 \left(\frac{t}{|A|} \right)^{(2^k-k-4)^{-1}} =  |A|^3 \left(\frac{t}{|A|} \right)^{(2^{d+1}-d-5)^{-1}} \,.
\]
	If $d=1$, then the arguments of the proof of Theorem \ref{t:E_k_variety} (see, e.g., \eqref{tmp:very_end}) give us 
\[
	\E(A) \ll t |A|^2 + \sum_s |A_s| \ll t |A|^2 \,.
\]
	For an arbitrary  algebraic group use the last part of Lemma \ref{l:Gowers_char_AB}.

	To derive \eqref{f:E(A,B)_in_V} we can suppose that $|B| \ge |A|$ because otherwise the required bound 
	$$
		\E(A,B)^2 \le \E(A) \E(B) \le \left( \frac{t}{|A|} \right)^{\beta}  |A|^{3} |B|^3 \le  \left( \frac{t}{|A|} \right)^{\beta} \cdot  |A|^{3 - \beta d} |B|^{3 + \beta d} 
	$$
	takes place for $\beta =   (2^{d+1}-d-5)^{-1}$, $d\ge 2$ (and similar for $d=1$),  see estimate \eqref{f:E(A)_in_V}.  
	Further let $|B|\ge |A|$.
	Then we use Lemma \ref{l:Gowers_char_AB} with $k=d$, combining with Theorem \ref{t:E_k_variety} (see formulae \eqref{f:A,B_scalar_exm}, \eqref{tmp:very_end-},  \eqref{tmp:very_end}) and the assumption $|A| \le |B|$ to obtain 
	\[
	\E(A,B) \le (|A||B|)^{\frac{3}{2} - \frac{\beta(d+2)}{2}}  \langle A,B \rangle^{\beta}_d  
	\ll
	(|A||B|)^{\frac{3}{2} - \frac{\beta(d+2)}{2}}  \left( \| B \|_{\mathcal{U}^d} + t \langle A,B \rangle_{d-1} \right)^{\beta} 
	\le
\]
\[
	\le 
	(|A||B|)^{\frac{3}{2} - \frac{\beta(d+2)}{2}}  \left( |B|^{d+1} + t |A| |B|^d \right)^{\beta} 
	\ll 
	(|A||B|)^{\frac{3}{2} - \frac{\beta(d+2)}{2}}  t^{\beta} |B|^{\beta (d+1)} 
	=  t^{\beta}  |A|^{\frac{3}{2} - \frac{\beta(d+2)}{2}} |B|^{\frac{3}{2} + \frac{\beta d}{2}}  
\]
	and \eqref{f:E(A,B)_in_V} follows. 
	Finally, to get \eqref{f:E(A)_in_V2} we use the dyadic pigeon--hole principle and the fact that $\T^{1/2k} (f)$  defines a norm of $f$ 
	%\eqref{def:T_k} 
	to find the number $\D>0$ and the set $P$ such that   
	$P = \{ x\in \Gr ~:~ \D < A^{(k)} (x) \le 2\D \}$ and $\T_{k+1} (A) \lesssim \D^2 \E(A,P)$.
	Thus (we assume that $c\le 1$)
\[
	\T_{k+1} (A) \lesssim   (t/|A|)^{\beta} |A|^{\frac{3}{2} - \frac{\beta d}{2}} (\D^2 |P|)^{\frac{1}{2}-\frac{d\beta}{2}} (\D |P|)^{1+d\beta}
	\le 
%	 (t/|A|)^{\beta} 
|A|^{-\frac{c\beta}{2}}
	|A|^{\frac{3+2k}{2} - \frac{\beta d}{2}+\beta kd} \cdot \T^{\frac{1}{2}-\frac{d\beta}{2}}_k (A) \,.
\]
	Suppose that $\T_k (A) \ge |A|^{2k-1-\eps}$, where $\eps \le c\beta/4$. 
	In view of the last inequality and $\beta \le 2^{-d+1}$ one has 
$$
	|A|^{-\frac{c\beta}{2}} |A|^{\frac{3+2k}{2} - \frac{\beta d }{2}+\beta kd} \cdot \T^{\frac{1}{2}-\frac{d\beta}{2}}_k (A) 
	\le |A|^{2-\eps} \T_k (A) \,.
$$
This completes the proof.
$\hfill\Box$
\end{proof}

%\bigskip 

\begin{remark}
	As it was said in the proof of Lemma \ref{l:Gowers_char_AB} the bound for $\E(A,B)$, $A\subseteq V$, where $V$ is our variety and $B\subseteq \Gr$ is an arbitrary set can be improved because we have a non--trivial upper bound for $\| A\|_{\mathcal{U}^l}$, $2\le l\le d+1$.
	Thus bounds \eqref{f:E(A,B)_in_V}, \eqref{f:E(A)_in_V2} can be improved slightly.\\
	Also, inequalities  \eqref{f:E(A,B)_in_V}, \eqref{f:E(A)_in_V2} say, basically,  that either  $|A^3|$ is much larger than $|A|$ or $|A^3|$ is larger than $|A^2|$.  
\end{remark}

%\bigskip 

Theorem \ref{t:E_k_variety} and Corollary \ref{cor:E(A)_in_V} imply the following criterion.  

\begin{corollary}
	Let $\Gr$ be a finite simple group, $V\subseteq \Gr$ be a variety, $d=\dim(V)$, $D=\deg (V)$.  
	Suppose that $t(V) = o(|V|)$.
	Then there is $\d=\d(d,n)>0$ such that for any $A\subseteq V$, $|A| \gg |V|$ the following holds  
\begin{equation}\label{f:FF(V)}
	\E(A) \ll_{d,D} |A|^{3-\delta} \,.
\end{equation}
\label{c:FF(V)}
\end{corollary}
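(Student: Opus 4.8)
The plan is to read off the statement from Theorem \ref{t:E_k_variety} and the last assertion of Lemma \ref{l:Gowers_char_AB} (equivalently, from the ``any algebraic group'' clause of Corollary \ref{cor:E(A)_in_V}); the only thing that really needs an argument is to upgrade the qualitative hypothesis $t(V)=o(|V|)$ to a genuine polynomial gap $t(V)\le|A|^{1-c}$ with $c=c(d)>0$. First I would note that, since $A\subseteq V$ and $|A|\gg|V|$, one has $|A|\asymp_{d,D}|V|\asymp_{d,D} q^{d}$ by Lang--Weil \eqref{f:LW} (summed over the irreducible components of $V$); so it is enough to prove $t=t(V)\ll_{d,D} q^{d-1}$, since then $t^{1+1/(2d)}\ll_{d,D} q^{\,d-1/2-1/(2d)}=o(q^{d})\asymp|A|$, i.e. $|A|\ge t^{1+c}$ with $c=1/(2d)$ for $q$ large.

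To establish $t\ll_{d,D} q^{d-1}$, I would argue as in the proof of Theorem \ref{t:E_k_variety}. Let $\Gamma\le\Gr$ be an algebraic subgroup with $x\Gamma\subseteq V$. Then $\dim(\Gamma)=\dim(x\Gamma)\le\dim(V)=d$, and all the varieties that appear have degrees and component counts bounded in terms of $d,D,n,\dim(\Gr),\deg(\Gr)$, so by Lang--Weil $|\Gamma|\asymp_{d,D} q^{\dim(\Gamma)}$ for $q$ large. If $\dim(\Gamma)=d$, then the geometric identity component $\Gamma^{\circ}$ is absolutely irreducible of dimension $d$ with $x\Gamma^{\circ}\subseteq V$; being full--dimensional and irreducible it must coincide with an irreducible component of $V$, so $|x\Gamma|\ge|\Gamma^{\circ}|\gg_{d,D} q^{d}\gg_{d,D}|V|$ and hence $t(V)\gg_{d,D}|V|$, contradicting $t(V)=o(|V|)$ for $q$ large. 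Therefore $\dim(\Gamma)\le d-1$ for every such $\Gamma$, whence $|\Gamma|\ll_{d,D} q^{d-1}$ and $t(V)\ll_{d,D} q^{d-1}$. (For $d=1$ this already yields $t\ll_{d,D}1$.)

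Once $|A|\ge t^{1+c}$ is in hand, I would finish as follows. By bound \eqref{f:A_in_V_bound} of Theorem \ref{t:E_k_variety} (whose proof uses only the geometry of $V$ and not commutativity) one has $\|A\|_{\mathcal{U}^{d+1}}\ll_{d,D} t|A|^{d+1}\ll_{d,D}|A|^{(d+1)+1-\frac{1}{d}}$, so $\|A\|_{\mathcal{U}^{d+1}}\le|A|^{(d+1)+1-c'}$ for a suitable $c'=c'(d)>0$ once $q$ is large enough to absorb the implied constant. Feeding this into the last part of Lemma \ref{l:Gowers_char_AB} with $k=d+1$ --- which is valid in an arbitrary, possibly non--commutative, group --- gives $\E(A)\le|A|^{3-c_{*}}$ with $c_{*}=c_{*}(c',d+1)>0$; shrinking the exponent slightly to absorb constants yields $\E(A)\ll_{d,D}|A|^{3-\delta}$ with $\delta=\delta(d)>0$, the dependence on $n$, $\dim(\Gr)$, $\deg(\Gr)$ entering only through $q_0$ and the implied constants. (For $d=1$ one can instead quote \eqref{tmp:very_end} directly, giving $\E(A)\ll_{d,D} t|A|^2\ll_{d,D}|A|^2$.) Equivalently, this is just bound \eqref{f:E(A)_3-d} of Corollary \ref{cor:E(A)_in_V} applied once the condition $|A|\ge t^{1+c}$ has been verified.

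The hard part is not really the energy estimate at all --- that is already done in Theorem \ref{t:E_k_variety} --- but the dichotomy in the second paragraph: a variety of bounded degree cannot contain a coset of a $d$--dimensional algebraic subgroup unless it essentially is such a coset, and the size of an algebraic subgroup that sits, via a coset, inside a bounded--degree $V$ only takes values comparable to powers of $q$. This is what prevents ``$o(|V|)$'' from being realized by a subpolynomial factor and converts the hypothesis into the polynomial gap $t(V)\ll_{d,D}|V|^{1-1/d}$.
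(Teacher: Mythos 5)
Your proof is correct and follows essentially the same route the paper intends: the paper states that Corollary~\ref{c:FF(V)} ``follows from'' Theorem~\ref{t:E_k_variety} and Corollary~\ref{cor:E(A)_in_V}, but it leaves unsaid the one step you carefully supply, namely that the qualitative hypothesis $t(V)=o(|V|)$ actually forces the polynomial gap $|A|\ge t^{1+c}$ required to invoke \eqref{f:E(A)_3-d}. Your Lang--Weil dichotomy does this cleanly: an algebraic subgroup $\G$ with $x\G\subseteq V$ has $\dim(\G)\le d$, and if $\dim(\G)=d$ then $x\G^\circ$ is an absolutely irreducible full--dimensional subvariety of $V$, hence one of its $\le D$ components, which would make $|\G|\gg_{d,D}q^d\gg_{d,D}|V|$ and contradict $t(V)=o(|V|)$; so $\dim(\G)\le d-1$ and $t\ll_{d,D}q^{d-1}$, while $|A|\gg|V|\asymp_{d,D}q^d$, giving $|A|\ge t^{1+1/(2d)}$ once $q$ is large. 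Feeding this into the ``any algebraic group'' clause of Corollary~\ref{cor:E(A)_in_V} (equivalently, into \eqref{f:A_in_V_bound} together with the last part of Lemma~\ref{l:Gowers_char_AB} with $k=d+1$) then gives $\E(A)\ll_{d,D}|A|^{3-\d}$ with $\d=\d(d)>0$, and your separate treatment of $d=1$ via \eqref{tmp:very_end} is also fine. The only caveat, which you share with the paper itself, is the tacit use of a uniform degree bound for algebraic subgroups of $\Gr$ when applying Lang--Weil to $\G$ with $\dim(\G)\le d-1$; the paper makes the same assumption in the proof of Theorem~\ref{t:E_k_variety} (``all parameters of all appeared varieties are controlled by $d,D,n,\dots$''), so your argument is consistent with the paper's standing conventions.
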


Clearly, if $t(V) \gg |V|$, then \eqref{f:FF(V)} does not hold and hence Corollary \ref{c:FF(V)} is indeed a criterion.
Also, it gives a lower bound for $\delta$ of the form $\delta \gg 1/d$. 
Recall that our current dependence on $d$ in \eqref{f:FF(V)} has an exponential nature.

\section{Applications}
\label{sec:applications}

%In this section we consider a particular case of a family of varieties of a finite algebraic group $\Gr$, namely, its conjugate classes. 
%Corollary \ref{cor:E(A)_in_V} of the previous section shows that any subset of a  conjugate class enjoys the uniformity property \eqref{f:FF(A)}. 
%Nevertheless in  this particular  case of normal sets we can prove much better bounds. 
%%We begin with a generalization of  Corollary \ref{cor:E(A)_in_V} to the case of normal sets (which are not variates, strictly speaking).

In \cite{LS1}, \cite{LSS} authors obtain the following results on growth of normal sets.

\begin{theorem}
	Let $\Gr$ be a finite simple group and $N\subseteq \Gr$ be a normal set.
	Then there is $n\ll \log |\G|/\log |N|$ such that $N^n = \Gr$. 
	Moreover, for any $\eps>0$ there is $\d=\d(\eps)>0$ such that any normal set $N$ with $|N| \le |\Gr|^\d$ satisfies $|NN| \ge |N|^{2-\eps}$. 
\label{t:gr_conj}
\end{theorem}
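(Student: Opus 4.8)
The plan is to treat the two assertions separately, both through the ordinary character theory of $\Gr$ (this is the argument of \cite{LS1}, \cite{LSS}).

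\emph{The covering statement.} I would first reduce to a single conjugacy class: since $N$ is a union of conjugacy classes and $\Gr$ has $k(\Gr)$ of them, there is a class $C\subseteq N$ with $|C|\ge |N|/k(\Gr)$, and for finite simple groups $\log|C|\gg\log|N|$ by standard facts on class sizes, so it is enough to prove $C^n=\Gr$ for some $n\ll\log|\Gr|/\log|C|$. For this I would invoke (or reprove) Shalev's conjugacy class covering theorem: the number of ways to write $g\in\Gr$ as a product of $n$ elements of $C$ equals $\frac{|C|^n}{|\Gr|}\sum_{\chi\in\FF{\Gr}}\frac{\chi(g_C)^{n}\,\overline{\chi(g)}}{\chi(1)^{n-1}}$, where $g_C\in C$. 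The trivial character contributes $|C|^n/|\Gr|$, and the remaining sum is at most $\frac{|C|^n}{|\Gr|}\sum_{\chi\neq 1}\big(|\chi(g_C)|/\chi(1)\big)^{n-1}\chi(1)$, which one controls by a character bound $|\chi(g_C)|\le\chi(1)^{1-\alpha}$ (valid since $g_C\neq 1$) together with the Witten-type estimate $\sum_{\chi}\chi(1)^{-s}\to 1$ as $s\to\infty$, uniform over finite simple groups. Taking $n$ a suitable multiple of $1/\alpha$ makes the trivial term dominate for every $g$, and since $\alpha\gg\log|C|/\log|\Gr|$ this $n$ has the required size.

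\emph{The product estimate.} Here I would use the $L^2$ (energy) method. By Cauchy--Schwarz, $|NN|\ge |N|^4/\E^{\times}(N)$, where $\E^{\times}(N):=|\{(n_1,n_2,n_3,n_4)\in N^4: n_1n_2=n_3n_4\}|=\|1_N*1_N\|_2^2$. Since $N$ is normal, $1_N$ is a class function; expanding it in irreducible characters and using orthogonality gives $\E^{\times}(N)=\frac{1}{|\Gr|}\sum_{\chi\in\FF{\Gr}}\frac{|S_\chi|^4}{\chi(1)^2}$ with $S_\chi:=\sum_{n\in N}\chi(n)$. The trivial character contributes $|N|^4/|\Gr|$, which is at most $|N|^{2}$ once $|N|\le|\Gr|^{1/2}$. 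For $\chi\neq 1$ one decomposes $N$ into its classes and uses a character bound $|\chi(g)|\le\chi(1)^{1-\alpha}$ for $g\neq 1$, obtaining $|S_\chi|\le |N|\chi(1)^{1-\alpha}$ and hence $\frac{1}{|\Gr|}\sum_{\chi\neq 1}\frac{|S_\chi|^4}{\chi(1)^2}\le \frac{|N|^4}{|\Gr|}\sum_{\chi\neq 1}\chi(1)^{2-4\alpha}$; bounding this last sum via the largest character degree and the Witten estimate yields a gain of a fixed power of $|\Gr|$, so the error is $\le |N|^4|\Gr|^{-c'}$ for some $c'=c'(\alpha)>0$. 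Requiring both contributions to be $\le|N|^{2+\eps}$ forces exactly $|N|\le|\Gr|^{\delta}$ with $\delta$ a small multiple of $c'$, which is the shape of the statement, and then $|NN|\ge|N|^4/\E^{\times}(N)\ge|N|^{2-\eps}$.

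\emph{Main obstacle.} The only non-elementary ingredient, and the heart of both parts, is a uniform character bound $|\chi(g)|\le\chi(1)^{1-\alpha}$ with $\alpha$ bounded away from $0$ over all finite simple groups and all $g\neq 1$ (refined, in the honest argument, to level-dependent bounds, so that the naive sum $\sum_{\chi\neq1}\chi(1)^{2-4\alpha}$ above is actually summable with room to spare). For groups of Lie type this is the deep theorem of Bezrukavnikov--Liebeck--Shalev--Tiep (building on Gluck and on Larsen--Shalev), for alternating groups it is the Larsen--Shalev bound on characters of symmetric groups, and the needed properties of the Witten zeta function $\sum_\chi\chi(1)^{-s}$ are due to Liebeck--Shalev. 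Granting these inputs, everything else is the Fourier/energy bookkeeping indicated above, which is why in practice one simply cites \cite{LS1}, \cite{LSS}.
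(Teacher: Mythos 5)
The paper does not give a proof of this theorem; it is quoted verbatim from \cite{LS1} and \cite{LSS} as background before the paper's own application (Corollary \ref{c:bounded_rank}). So there is no ``paper's own proof'' to compare against; you are really reconstructing the Liebeck--Shalev / Liebeck--Schul--Shalev argument, and your outline captures its skeleton correctly: Frobenius's formula for the covering statement, orthogonality and energy for the product estimate, character bounds plus Witten zeta estimates to control the non-trivial characters, and a reduction from normal sets to conjugacy classes via the fact that $k(\Gr)=|\Gr|^{o(1)}$ while every non-trivial class in a finite simple group has size $|\Gr|^{\Omega(1)}$, so the largest class inside $N$ has $\log|C|\asymp\log|N|$.

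One caution about how you have phrased the crux: the character bound $|\chi(g)|\le\chi(1)^{1-\alpha}$ with a single $\alpha>0$ uniform over all finite simple groups and all $g\ne1$ is \emph{false} in that crude form (think of a transposition in a large alternating group, or a transvection in a high-rank group of Lie type). What is actually available, and what both cited papers use, is a bound in which $\alpha$ degrades with the ``size'' of the class, roughly $\alpha\asymp\log|g^{\Gr}|/\log|\Gr|$ (Larsen--Shalev for alternating groups, Gluck/Liebeck--Shalev and more recent refinements for Lie type). This is exactly why the exponent $n$ in the covering statement comes out as $\log|\Gr|/\log|N|$ rather than a constant, and why $\d$ in the product statement must depend on $\eps$: the gain $\alpha$ is not universal, and the Witten sum $\sum_{\chi\ne1}\chi(1)^{-s}$ only converges once $s$ exceeds a threshold tied to the minimal faithful degree, which for bounded-rank families does not let you take $\alpha$ arbitrarily small. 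You do flag this in your ``main obstacle'' paragraph, so you clearly know where the weight lies, but the body of the sketch should state the level-dependent bound from the start rather than the uniform one, since plugging a uniform $\alpha$ into the energy estimate would not even produce the correct shape of the conclusion.
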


From Corollary \ref{cor:E(A)_in_V} we obtain a result on growth of an {\it arbitrary} subset of a conjugate class.

\begin{corollary}
%	Let $\Gr=\Gr_r (q)$ be a finite simple group of Lie type, $r = \mathrm{rk}(\Gr)$ and let $C\subseteq \Gr$ be a  conjugate class.
	Let $\Gr$ be a finite connected semisimple algebraic group and let $C \subseteq \Gr$ be a  conjugate class.
	% of a semisimple element $x$.
	Also, let $A\subseteq C$ be an arbitrary set with $|A| \ge t(\mathrm{Zcl}(C))^{1+\eps}$.   
	Then for a certain $\d>0$ depending on dimension of $C$ one has  $\E(A) \ll_{\deg(\mathrm{Zcl}(C))} |A|^{3-\d}$.
	In particular, $|AA| \gg_{\deg(\mathrm{Zcl}(C))} |A|^{1+\d}$. 
\label{c:bounded_rank}
\end{corollary}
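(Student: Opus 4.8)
The plan is to recognise $V:=\mathrm{Zcl}(C)$ as a variety covered by Corollary~\ref{cor:E(A)_in_V} and then to convert the resulting energy estimate into a product--set estimate by soft inequalities.

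First I would record the geometric features of $V$. Fix $x\in C$; since $\Gr$ is connected, $C$ is the image of the morphism $\Gr\to\Gr$, $g\mapsto gxg^{-1}$, hence $C$, and therefore $V=\mathrm{Zcl}(C)$, is absolutely irreducible and $\dim(V)=\dim(C)$. Moreover $C$ is the homogeneous space $\Gr/Z_{\Gr}(x)$, so $D:=\deg(V)$ is bounded purely in terms of $\dim(\Gr),\deg(\Gr)$ and the ambient dimension $n$. The assumption $|A|\ge t(\mathrm{Zcl}(C))^{1+\eps}=t(V)^{1+\eps}$ is exactly the hypothesis of inequality~\eqref{f:E(A)_3-d} of Corollary~\ref{cor:E(A)_in_V} with parameters $d=\dim(C)$ and $c=\eps$; recall that this particular bound is valid in an arbitrary algebraic group, as it rests only on the estimate $\|A\|_{\mathcal{U}^{d+1}}\ll_{d,D}t|A|^{d+1}$ of Theorem~\ref{t:E_k_variety} (which holds for any $\Gr$) together with the last assertion of Lemma~\ref{l:Gowers_char_AB}. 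Applying it produces $\delta=\delta(\dim(C),\eps)>0$ with $\E(A)\ll_{D}|A|^{3-\delta}$, which is the first claim.

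It remains to pass from the energy bound to a lower bound for $|AA|$. By the Cauchy--Schwarz inequality~\eqref{f:energy_CS} (with $B=A$, using that the common energy $\E(A,A)$ equals $\E(A)$) one gets $|AA^{-1}|\cdot\E(A)\ge|A|^4$, whence $|AA^{-1}|\gg|A|^{1+\delta}$. On the other hand, the non--commutative Ruzsa triangle inequality $|XZ^{-1}|\,|Y|\le|XY^{-1}|\,|YZ^{-1}|$ (see, e.g., \cite{TV}), applied with $X=Z=A$ and $Y=A^{-1}$, gives $|AA^{-1}|\,|A|\le|AA|\cdot|A^{-1}A^{-1}|=|AA|^2$, since $|A^{-1}A^{-1}|=|(AA)^{-1}|=|AA|$. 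Combining the two bounds yields $|AA|^2\ge|A|\cdot|AA^{-1}|\gg|A|^{2+\delta}$, i.e.\ $|AA|\gg|A|^{1+\delta/2}$, which is the ``in particular'' statement after renaming $\delta/2$ as $\delta$.

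I expect no essential obstacle: the real work is already contained in Theorem~\ref{t:E_k_variety} and Corollary~\ref{cor:E(A)_in_V}, and the final paragraph above is routine. The only point that needs (mild) care is the verification that the Zariski closure of a conjugacy class of a connected semisimple group is absolutely irreducible of dimension $\dim(C)$ and of degree bounded solely by $\dim(\Gr),\deg(\Gr),n$, so that the implied constant in~\eqref{f:E(A)_3-d} depends only on $\deg(\mathrm{Zcl}(C))$ and the fixed data of the ambient group, exactly as stated; here one uses that for connected $\Gr$ the class and its closure have the same dimension, which is what makes the parameter $d$ in the invocation of Corollary~\ref{cor:E(A)_in_V} equal to $\dim(C)$.
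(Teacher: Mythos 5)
Your proof is correct and follows the same route as the paper: recognise $V=\mathrm{Zcl}(C)$ as the variety to which Corollary \ref{cor:E(A)_in_V} (specifically the non-abelian bound \eqref{f:E(A)_3-d}) applies, then pass from the energy estimate to the product-set estimate. For the last step you can avoid the factor-of-two loss from Ruzsa's inequality: since $\E(A,A^{-1}) \le \E(A)$ (noted after \eqref{f:T_k_product}) and $|A^{-1}A^{-1}| = |AA|$, inequality \eqref{f:energy_CS} applied to the pair $(A,A^{-1})$ gives $|AA| \ge |A|^4/\E(A,A^{-1}) \ge |A|^4/\E(A) \gg |A|^{1+\delta}$ with the same $\delta$.
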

\begin{proof} 
	In is well--known (e.g., see, \cite[pages 15, 17]{Humphreys})  that for any conjugate class $C$ its Zariski closure $\mathrm{Zcl}(C)$ equals $C$ and possibly other conjugate classes of strictly lower dimension, as well as that $C=C(x)$ is a variety iff  $x \in \Gr$ is a semisimple element.
	Now the result follows from a direct application of Corollary \ref{cor:E(A)_in_V} where the implied constants depend on $\deg (\mathrm{Zcl}(C))$, $\dim (C)$,   $\dim(\Gr), \deg(\Gr)$ and dimension of the ground affine space.       
%	Notice that if $x$ is not necessary a  semisimple element, then the same statement takes place with implied constant in big--O, depending on $\deg (\mathrm{Zcl}(C))$. 
This completes the proof.
$\hfill\Box$
\end{proof}

\bigskip

%Notice that 
One can see that $t(C) \le |C|^{1-c_*}$ for a certain $c_*>0$  
via the general bound on such intersections with generating sets, see \cite{LP} or, alternatively, from some modifications of Theorem \ref{t:gr_conj}, see \cite{LS1}, \cite{LSS}.   
Thus Corollary \ref{c:bounded_rank} takes place for all large subsets of conjugate classes.

\bigskip

{\bf Question.} 
%Is it true that an analogue of Corollary \ref{c:bounded_rank} takes place for finite simple groups of Lie type of unbounded rank, i.e. in the case when $\mathrm{rk} (\Gr) \to \infty$? 
%In other words, 
Is it true that for any $A\subseteq C$, where $C$ is a conjugate class such that  $|A|\ge |C|^{1-o(1)}$, say, one has $A^n = \Gr$, where $n$ is a function on
$\log |\Gr|/\log |A|$? For  $n\ll \log |\Gr|/\log |A|$?
For $C=C(x)$, where $x$ is a semisimple element?

\bigskip 

Now we are ready to obtain a non--trivial upper bound for any sufficiently large subset of a  Chevalley group living in a variety differ from the maximal parabolic subgroup.

\begin{theorem}
	Let $\Gr_r (\F_q)$ be a finite Chevalley group with rank $r$ and odd $q$  and $\Pi \le \Gr_r (\F_q)$ be its a maximal (by size) parabolic subgroup.
	Also, let $V \subset \Gr_r (\F_q)$ be a variety differ from all shifts of conjugates of $\Pi$.
	Then for any $A\subseteq V$, $|A| \ge |\Pi| q^{-1+c}$, $c>0$ one has 
\begin{equation}\label{f:Chevalley_A}
	\| \FF{A} (\rho) \|_o \le |A|^{1-\d} \,,
\end{equation}
	where $\d = \d(c,r) >0$ and $\rho$ is any non--trivial representation of $\Gr_r (\F_q)$. 
\label{t:Chevalley_A}
\end{theorem}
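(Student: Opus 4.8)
The plan is to control $\|\FF{A}(\rho)\|_o$ through a flattening estimate for the non-commutative Gowers norms $\T_k$, fed by the energy bound of Corollary~\ref{cor:E(A)_in_V} and by the quasirandomness of $\Gr_r(\F_q)$.

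\emph{Step 1 (the hypothesis controls $t(V)$).} I would first deduce from ``$V$ differs from all shifts of conjugates of $\Pi$'' that $t(V)\ll |\Pi|/q$. A coset $x\G$ of an algebraic subgroup $\G$ is an irreducible variety of dimension $\dim\G$, so $x\G\subseteq V$ forces $x\G$ to be Zariski dense in an irreducible component of $V$ of dimension $\ge\dim\G$. Since $\Pi$ is the largest parabolic of $\Gr_r(\F_q)$ and, by the classification of large subgroups of Chevalley groups (see \cite{LP} and the references therein), every proper algebraic subgroup of dimension $\ge\dim\Pi$ is conjugate to $\Pi$, a $\G$ with $\dim\G\ge\dim\Pi$ would make a component of $V$ a shift of a conjugate of $\Pi$ — excluded. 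Hence $\dim\G\le\dim\Pi-1$ for every $\G$ realising $t(V)$, and by Lang--Weil \eqref{f:LW} we get $t(V)\ll q^{\dim\Pi-1}\ll |\Pi|/q$. (This is the only place the hypothesis enters; here I use the paper's standing assumption of absolute irreducibility, together with the fact that a sub-coset of full parabolic dimension forces $V$ itself to be that coset, to make the reduction clean.)

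\emph{Steps 2--3 (robust energy decrement and flattening).} Since $|A|\ge|\Pi|q^{-1+c}\gg q^{c}\,t(V)$ and $t(V)$ is a bounded power of $q$, we obtain $|A|\ge t(V)^{1+c'}$ with $c'=c'(c,r)>0$; the same holds for every $A'\subseteq g^{-1}A$ with $|A'|\ge|A|^{1-\eta}$ ($\eta$ small), because $g^{-1}A\subseteq g^{-1}V$ and $t(g^{-1}V)=t(V)$. By bound \eqref{f:E(A)_3-d} of Corollary~\ref{cor:E(A)_in_V}, which holds in any algebraic group, $\E(A')\ll|A'|^{3-\delta_0}$ for all such $A'$, with $\delta_0=\delta_0(c,r)>0$; by the standard non-commutative Pl\"unnecke--Ruzsa / Balog--Szemer\'edi--Gowers bookkeeping (\cite{TV}, \cite{Brendan_rich}) this is equivalent to: every $A'\subseteq g^{-1}A$ with $|A'|\gg|A|K^{-O(1)}$ has $|(A')^3|\gg K^{\Omega(1)}|A'|$, where $K=|A|^{\delta_1}$, $\delta_1=\delta_1(c,r)>0$. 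I would then invoke the Bourgain--Gamburd-type $\ell^2$-flattening machinery (as in \cite{BG_SL}; see \cite[Sections~8,10]{sh_as}) with this $K$ and $K_*:=\min\{K,\,d_{\min}(\Gr_r(\F_q))\}$: since $\Gr_r(\F_q)$ is quasirandom (\cite{Gow_random}), $d_{\min}(\Gr_r(\F_q))\gg q^{c_r}$ with $c_r=c_r(r)>0$, so $K_*\gg q^{g}$ with $g=g(c,r)>0$, and for every fixed $k$ one gets
\[
\T_k(A)-\frac{|A|^{2k}}{|\Gr_r(\F_q)|}\ll\frac{|A|^{2k}}{K_*^{\,\kappa(k)}}\,,\qquad \kappa(k)\gg\log k
\]
(the gain accumulating over the $\sim\log k$ doubling steps of the iteration).

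\emph{Extraction and the main obstacle.} For any $k\ge2$ one has $\T_k(A)=(A*A^{-1})^{(k)}(1)=|\Gr_r(\F_q)|^{-1}\sum_{\rho'}d_{\rho'}\,\tr\big((\FF{A}(\rho')\FF{A}(\rho')^*)^{k}\big)$; all summands are nonnegative, the trivial one equals $|A|^{2k}/|\Gr_r(\F_q)|$, and $\tr(X^k)\ge\|X\|_o^{k}$ for positive semidefinite $X$, so for any non-trivial irreducible $\rho$
\[
\|\FF{A}(\rho)\|_o^{2k}\le\frac{|\Gr_r(\F_q)|}{d_\rho}\Big(\T_k(A)-\frac{|A|^{2k}}{|\Gr_r(\F_q)|}\Big)\ll\frac{|\Gr_r(\F_q)|}{d_\rho\,K_*^{\kappa(k)}}\,|A|^{2k}\,.
\]
Here $|\Gr_r(\F_q)|/d_\rho\ll q^{E_r}$ with $E_r=E_r(r)$, and $q$ and $|A|$ are polynomially equivalent with exponents depending only on $c,r$; choosing $k=k(c,r)$ large enough that $g\,\kappa(k)$ beats $E_r$ by a definite margin then gives $\|\FF{A}(\rho)\|_o\le|A|^{1-\delta}$ with $\delta=\delta(c,r)>0$, and a general $\rho$ follows by passing to the irreducible constituent of largest operator norm. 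The substantial point is the flattening of Steps~2--3: pushing the gain $K_*^{\kappa(k)}$ past the fixed loss $|\Gr_r(\F_q)|/d_\rho$ is the Bourgain--Gamburd heart of the argument, and is precisely where quasirandomness of the Chevalley group is used. The other delicate step is Step~1 — turning ``$V$ is not a shift of a conjugate of $\Pi$'' into $t(V)\ll|\Pi|/q$ relies on the structure theory of Chevalley groups and on the non-degeneracy conventions for $V$; the Pl\"unnecke--Ruzsa/BSG bookkeeping and the optimization over $k$ are routine.
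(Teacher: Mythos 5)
Your proposal is correct in outline and reaches the same conclusion, but it reorganizes the middle of the argument in a genuinely different way from the paper. The paper's proof is: (i) use \cite[Lemma 8]{s_Chevalley} to show $t(V)\ll |V|q^{-c_*}$; (ii) deduce a non-concentration estimate $|A\cap x\G|\ll |A|q^{-c/2}$ for \emph{every} proper subgroup $\G$, split into two cases — if $\G$ is not conjugate to $\Pi$, apply the energy bound of Corollary \ref{cor:E(A)_in_V} to $A_*=A\cap x\G$ and combine with $A_*^{-1}A_*\subseteq\G$ and the size gap $|\G|\le|\Pi|/q$; if $\G$ is conjugate to $\Pi$, use instead the variety intersection bound $|V\cap x\Pi y|\ll|\Pi|/q$; (iii) with $A$ now a generating set satisfying quantitative non-concentration, defer to the growth-in-groups machinery of \cite{H}, \cite{H_ideas}, \cite[\S8,10]{sh_as}, which implicitly routes through the Product Theorem for finite simple groups and quasirandomness. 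You instead push the energy bound of Corollary \ref{cor:E(A)_in_V} directly onto \emph{all} large $A'\subseteq g^{-1}A$ (using shift-invariance of $t$ and that $g^{-1}V$ is again a variety), convert $\E(A')\ll|A'|^{3-\delta_0}$ into the tripling hypothesis $|(A')^3|\gg K^{\Omega(1)}|A'|$ via Cauchy--Schwarz and non-abelian Pl\"unnecke--Ruzsa, and then feed this to the $\ell^2$-flattening lemma (effectively the paper's own commented-out Lemma \ref{l:Bourgain_representation}) — thus bypassing the Product Theorem entirely, since the source of ``no large subset has small tripling'' is the variety structure rather than a classification of approximate subgroups. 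Your final step, extracting $\|\FF{A}(\rho)\|_o$ from the $\T_k$ bound via $\tr((\FF{A}(\rho)\FF{A}(\rho)^*)^k)\ge\|\FF{A}(\rho)\|_o^{2k}$ and positivity of the other summands, is spelled out where the paper simply cites references. Two caveats worth naming: your Step~1 reasons via $\dim\G$ and ``every proper algebraic subgroup of dimension $\ge\dim\Pi$ is conjugate to $\Pi$'', whereas the paper uses the sharper size dichotomy from \cite[Lemma 8]{s_Chevalley} ($|\G|\le|\Pi|/q$ or $\G$ conjugate to $\Pi$) — this matters because non-conjugate parabolics can share a dimension (and a cardinality), so you should really phrase the exclusion as the paper does; and in both your account and the paper's, the hypothesis ``$V$ differs from all shifts of conjugates of $\Pi$'' must be read as ``$V$ does not \emph{contain} such a shift'', since otherwise $|V\cap x\Pi y|$ can equal $|\Pi|$ and the $t(V)$ bound fails.
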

\begin{proof} 
	Since by the assumption $|A| \ge |\Pi| q^{-1+c}$, it follows that $|V| \ge |\Pi| q^{-1+c}$ and hence $V$ is rather large.
	Also, 
	%it is easy to see 
	one can see 
	that, trivially,  $|A|\ge |\Pi| q^{-1+c} \gg q^{1+c}$. 
	Further by \cite[Lemma 8]{s_Chevalley} we know that $\Pi$ is the maximal  (by size) subgroup of $\Gr(\F_q)$ and for all other  subgroups $\G \le \Gr(\F_q)$ one has $|\G| \le q^{-1} |\Pi|$ ($\G$ is not conjugate to $\Pi$, of course).   
	In particular, in view of \eqref{f:LW} 
	$$
		t(V) \le \max_{x,y \in \Gr_r (\F_q)} \{ q^{-1} |\Pi|, |V \cap x\Pi y| \} \ll |V| q^{-c_*} \,,
	$$ 
	where $c_* = \min \{c,1\}$. 
	Take any subgroup $H$ differ from all conjugates of $\Pi$.
	Also, let  $x\in \Gr_r (\F_q)$ be  an arbitrary element.
	Our task is to estimate above size of the intersection $A_* := A\cap xH \subseteq V$.
	By Corollary \ref{c:FF(V)} and estimate \eqref{f:energy_CS}, we have 
$$
	|A_*|^{1+\d} \ll |A^{-1}_* A_*| \le |H| \le |\Pi| q^{-1} \le |A| q^{-c}
$$  
	and hence in particular, $|A_*| \ll |A|^{(1+\d)^{-1}}$ (actually, in this place of the proof we can assume a weaker condition on size of $A$). 
	By a similar argument and estimate \eqref{f:LW}, we derive that 
$$
	|A\cap x \Pi y| \le |V\cap x\Pi y| \ll |\Pi|q^{-1} \le |A| q^{-c}  
$$
	and hence for {\it any} proper subgroup $\G \subset \Gr_r (\F_q)$ and for all $x\in \Gr_r (\F_q)$  one has $|A\cap x\G| \ll |A|^{} q^{-c/2}$ (we use $|A| \gg q$ and assume that $\d\le c$). 
	In particular, $A$ is a generating set of $\Gr_r (\F_q)$. 
	Combining this observation with the fact (see \cite{LS_representations}) that Chevalley groups are quasi--random in the sense of Gowers \cite{Gow_random}, we obtain desired estimate \eqref{f:Chevalley_A}, see, e.g., \cite{H}, \cite{H_ideas} and  \cite[Sections 8,10]{sh_as}. 
This completes the proof.
$\hfill\Box$
\end{proof}

\bigskip

Now we obtain an application of  Corollary \ref{cor:E(A)_in_V} to some questions about the restriction phenomenon.
In this setting our group $\Gr$ is $\Gr = \F^n$, $\F$ is  a finite field, $V\subseteq \F^n$ is a variety  and $\Gr$ acts on $\Gr$ via shifts.  
For any function $g:\F^n \to \C$ consider  the commutative analogue of \eqref{f:Fourier_representations}  
$$
\hat g(\xi) := \sum_{x\in \F^n} g(x) e(-x\cdot \xi)\,, 
$$
as well as the inverse Fourier transform of a function $f:V \to \C$
$$
	(fd\sigma)^\lor(x) := \frac{1}{|V|} \sum_{\xi\in V} f(\xi) e(x\cdot\xi)\,,
$$
where $e(x\cdot\xi) = e^{2\pi i (x_1\xi_1 + \dots +x_n \xi_n)}$ for $x = (x_1,\dots,x_n)$, $\xi = (\xi_1,\dots,\xi_n)$. 
Thus a "Lebesgue $L^q$-norm" of $f$ on $V$ is defined as
$$
\|f\|_{L^q(V,d\sigma)}:= \left( \frac{1}{|V|} \sum_{\xi\in V} |f(\xi) |^q \right)^{\frac{1}{q}}\,,
$$
while for a function $g$ 
%on $V$ 
it is 
$$
\|g\|_{L^q(\F^n)}:= \left( \sum_{x\in \F^n} |g(x) |^q \right)^{\frac{1}{q}}\,.
$$
The finite field restriction problem \cite{MoT} for our variety $V$ seeks exponents pairs $(q,r)$ such that one has the inequality
$$
	\left\|(f d \sigma)^{\vee}\right\|_{L^{r}\left(\F^{n}\right)} \leq R^{*}(q \rightarrow r)\|f\|_{L^{q}(V, d \sigma)} 
$$
	or, equivalently, 
$$
	\|\widehat{g}\|_{L^{q^{\prime}}(V, d \sigma)} \leq R^{*}(q \rightarrow r)\|g\|_{L^{r^{\prime}}\left(\F^{n}\right)}
$$
	takes place with a constant $R^{*}(q \rightarrow r)$ independent of the size of the finite field.
%	Sometimes we allow ourselves to lose logarithmic powers of $|\F|$. 
%	In this situation we write $\lesssim$ and $\gtrsim$ instead of $\ll$, $\gg$. 
	As before we use the notation  $\lesssim$ and $\gtrsim$ instead of $\ll$, $\gg$ allowing ourselves to lose logarithmic powers of $|\F|$.

\bigskip

Using the arguments of the proofs of \cite[Lemma 5.1, Proposition 5.2]{MoT}, we obtain

\begin{theorem}
	Let $V\subseteq \F^n$ be a variety, $d=\dim(V)$.
	Suppose that $V$ does not contain any line. 
	Then $R^* (\frac{4}{3-c} \rightarrow 4) \lesssim 1$, where $c=c(d) > 0$. 
\label{t:restriction}
\end{theorem}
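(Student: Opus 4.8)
The plan is to follow the standard restriction-to-energy dictionary in finite fields, as in \cite[Lemma 5.1, Proposition 5.2]{MoT}, and feed in the energy bound we have already proved. First I would recall the $L^2$-based machinery: by duality and a $TT^*$ argument, the estimate $R^*(q \to 4) \lesssim 1$ is equivalent to a bound of the form $\|(fd\sigma)^\vee\|_{L^4(\F^n)}^4 \lesssim \|f\|_{L^q(V,d\sigma)}^4$, and expanding the fourth power one sees that the left side is governed by the additive energy of $V$, that is, by the number of solutions of $\xi_1 + \xi_2 = \xi_3 + \xi_4$ with $\xi_i \in V$ weighted by $f$. Concretely, $\|(fd\sigma)^\vee\|_{L^4}^4 = |\F^n|\cdot |V|^{-4} \sum_{\xi_1+\xi_2=\xi_3+\xi_4} f(\xi_1)f(\xi_2)\overline{f(\xi_3)f(\xi_4)}$, and to control this uniformly over $f$ one needs good control of $\E(A)$ (in the additive sense) for all subsets $A \subseteq V$, which is exactly what Corollary \ref{cor:E(A)_in_V} provides once we check the hypothesis.

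The key point is translating the geometric hypothesis ``$V$ contains no line'' into the condition ``$t(V)$ is small'' needed to invoke Corollary \ref{cor:E(A)_in_V}. In the abelian group $\Gr = \F^n$, an algebraic subgroup $\G$ is a linear subspace, so a coset $x + \G \subseteq V$ of positive dimension forces $V$ to contain a line; hence the no-line assumption gives $t(V) \le 1$ (the only ``cosets'' in $V$ are points). Therefore Corollary \ref{cor:E(A)_in_V} applies with $t = O(1)$ and yields, for every $A \subseteq V$ with $|A|$ at least an absolute constant, $\E(A) \ll_{d,D} |A|^{3 - \beta}$ with $\beta = \beta(d) \in [4^{-d}, 2^{-d+1}] > 0$ (for $d=1$ one gets $\E(A) \ll |A|^2$, which is even better). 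This is the arithmetic input; everything else is the soft restriction formalism.

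Next I would run the dyadic/level-set reduction: decompose $f$ into $O(\log|\F|)$ pieces on which $|f|$ is roughly constant, so it suffices to bound $\|(\mathbf{1}_A d\sigma)^\vee\|_{L^4}$ for $A \subseteq V$ a level set, at the cost of the logarithmic losses hidden in $\lesssim$. For such $A$, the computation above gives $\|(\mathbf{1}_A d\sigma)^\vee\|_{L^4}^4 = |\F^n| |V|^{-4} \E(A) \lesssim |\F^n| |V|^{-4} |A|^{3-c}$ with $c = c(d) > 0$. On the other side, $\|\mathbf{1}_A\|_{L^q(V,d\sigma)}^q = |A|/|V|$. Setting $q = \frac{4}{3-c}$ so that $q' = \frac{4}{1+c}$, one checks (using $|V| \asymp q^d$ from Lang--Weil, though really only crude size bounds are needed here) that the inequality $\|(\mathbf{1}_A d\sigma)^\vee\|_{L^4} \lesssim \|\mathbf{1}_A\|_{L^q(V,d\sigma)}$ holds with a field-independent constant precisely because the exponent $3-c$ on $|A|$ matches. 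Summing the dyadic pieces and absorbing the logarithms into $\lesssim$ gives $R^*(\frac{4}{3-c} \to 4) \lesssim 1$.

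The main obstacle I expect is not any single hard estimate but rather being careful with the bookkeeping of exponents: one must verify that the gain $c$ in the energy bound is exactly what is needed to push the trivial exponent $q = 4/3$ (which corresponds to the no-gain bound $\E(A) \le |A|^3$) down to $q = 4/(3-c)$, and that all the powers of $|V|$ and $|\F^n|$ cancel correctly in the duality. A secondary subtlety is ensuring the no-line hypothesis really does kill $t(V)$ — one should note that $V$ need not be irreducible, but the argument of Theorem \ref{t:E_k_variety} already handles reducible $V$ and shows the only cosets inside $V$ of positive size come from stabilizers of irreducible components, each of which, being a subgroup (linear subspace) whose coset lies in $V$, would again produce a line. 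Apart from that, the proof is a routine transcription of the Mockenhaupt--Tao scheme with our $\E(A) \ll |A|^{3-c}$ substituted for the variety-specific energy bounds they use.
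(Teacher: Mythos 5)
Your proposal is correct and follows essentially the same route as the paper: translate the no-line hypothesis into the bound $t(V)=1$, invoke Corollary \ref{cor:E(A)_in_V} to get $\E(A)\ll |A|^{3-c}$ uniformly over $A\subseteq V$, and then transfer this to a restriction estimate via the Mockenhaupt--Tao $L^4$/energy dictionary together with a dyadic level-set reduction. The only difference is that you supply more of the surrounding detail (the $TT^*$ expansion, the observation about reducible $V$ and stabilizers) that the paper compresses into the citation of \cite[Lemma 5.1, Proposition 5.2]{MoT}; this is faithful elaboration rather than a genuinely different argument.

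One small caution on a point you and the paper both state without proof: the claim that ``$V$ contains no line'' forces $t(V)=1$ is not quite the same as saying that algebraic subgroups of the additive group $\F_q^n$ are $\F_q$-linear subspaces. In characteristic $p$ there are one-dimensional additive algebraic subgroups that are not $\F_q$-linear (graphs of additive polynomials such as $x\mapsto x^p$), so ``no line'' should be read as ``no coset of a positive-dimensional algebraic subgroup''; with that reading the reduction to $t(V)=1$ is immediate, and the rest of your argument goes through unchanged.
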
 
\begin{proof} 
%	Let $t= t(V)$ and suppose that $A\subseteq V$ such that $|A| \ge t^{1+\eps}$. 
	According our assumption that $V$ does  not contain any line, we see that the parameter $t(V)$ equals $1$. 
	Hence by Corollary \ref{cor:E(A)_in_V} we know that $\E(A) \ll |A|^{3-c} = |A|^\kappa$, where $c=c(d)>0$.  
%	Put $\kappa = 3-c$
	Put $q=4/\kappa$ and we want to obtain a good   bound for  $R^* (q \rightarrow 4)$. 
	We want to obtain an estimate of the form  (see the  proofs of \cite[Lemma 5.1, Proposition 5.2]{MoT})
\begin{comment}
	(we use the Fourier transform)
\[
	\sigma:= 
	\left( \frac{|\F|^n}{|V|^4} \sum_x (fS * fS)^2 (x) \right)^{1/4}
		=
	\left( \frac{1}{|V|^4} \sum_x |(fS)^\vee (x)|^4 \right)^{1/4} 
		\lesssim 
			\left( \frac{1}{|V|} \sum_{x} |f(x)|^q \right)^{1/q}  \,.
\]
\end{comment}
\[
	\sum_x (fV * fV )^2 (x) \lesssim  \left( \sum_{x\in V} |f(x)|^q \right)^{4/q} \,,
\]
	where $f$ is an arbitrary  function (we can freely assume that $f$ is positive). 
	Using the dyadic pigeon--hole principle, we need to prove the last bound for any $f=A$ with $A\subseteq V$ and this is equivalent to  
\[
	\E(A) \lesssim |A|^{4/q} = |A|^{3-c} \,.
\]
%	The last bound takes place for  $|A| \ge t^{1+\eps}$. 
%%	To replace $R^* (\frac{4}{3-c} \rightarrow 4) \lesssim 1$ bound to $R^* (\frac{4}{3-c} \rightarrow 4) = O(1)$ estimate apply 
This completes the proof.
$\hfill\Box$
\end{proof}

\bigskip

Notice that if the variety $V$ contains subspaces of positive dimension, then there is no any restriction--type result 
as in 
%similar to 
Theorem \ref{t:restriction} in such generality see, e.g., \cite[Section 4]{MoT}.

\section{Appendix}
\label{sec:appendix}

%\bigskip 

Now we obtain an analogue of the Weyl criterion for non--commutative 
%situation.   
case. 
In this situation ordinary abelian intervals or progressions correspond to some structural non--abelian objects as subgroups. 
In particular, the first part of proposition below is applicable for subgroups $H$ of our group $\Gr$. 
Of course such results should be known but it is difficult to find them in the literature and we include Proposition \ref{p:UD_non-commutative}  and its converse for the  completeness.

\begin{proposition}
	Let $\eps \in (0,1]$ be a real number,  $\Gr$ be a finite group, $A\subseteq \Gr$ be a set such that 
	for any non--trivial irreducible representation $\rho$ one has 
	\begin{equation}\label{l:UD_1-delta}
	\| \FF{A} (\rho) \|_{o} \le \eps |A| \,.
	\end{equation}  
	Then for any $H, H_*\subseteq \Gr$, $1\in H_*$ with $|HH_*| \le |H| + K|H_*|$ one has 
	\begin{equation}\label{f:UD_non-commutative} 
	\left| |A\cap H| - \frac{|A||H|}{|\Gr|} \right| \le 2 K|H_*| + \eps |A| \sqrt{|H|/|H_*| + K} \,.
	\end{equation}
\label{p:UD_non-commutative} 
\end{proposition}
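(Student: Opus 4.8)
The plan is to run a non-commutative version of the expander mixing lemma: replace the indicator $1_H$ by an $H_*$-smoothed density that still behaves well under convolution, bound the resulting $L^1$-error, and estimate the smoothed main term by Parseval together with the spectral hypothesis \eqref{l:UD_1-delta}. First I would pass to the balanced function $A_0 := 1_A - \frac{|A|}{|\Gr|} 1_\Gr$. Since $\sum_{g\in\Gr} \rho(g) = 0$ for every non-trivial irreducible $\rho$, we have $\FF{A_0}(\rho) = \FF{A}(\rho)$ for all $\rho\neq\rho_0$ and $\FF{A_0}(\rho_0) = 0$, while $\| A_0\|_\infty \le 1$ (because $|A| \le |\Gr|$). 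Moreover $\sum_{g} A_0(g)1_H(g) = |A\cap H| - \frac{|A||H|}{|\Gr|}$, so it suffices to bound $\big|\sum_g A_0(g) 1_H(g)\big|$.

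For the smoothing I would take $\mu := \frac{1}{|H_*|} 1_{H_*^{-1}}$, which is a probability density because $1\in H_*$, and set $\psi := 1_{HH_*} * \mu$, so that $\psi(x) = \frac{1}{|H_*|}|HH_* \cap xH_*|$. The point is that for $x\in H$ one has $xH_* \subseteq HH_*$, hence $\psi(x) = 1$ exactly, whereas in general $\psi \ge 0$ and $\sum_x \psi(x) = |HH_*|$. Consequently $\sum_g |1_H(g) - \psi(g)| = |HH_*| - |H| \le K|H_*|$ (here $1\in H_*$ is used once more, to get $H\subseteq HH_*$), and therefore
\[
	\Big| \sum_g A_0(g)\big(1_H(g) - \psi(g)\big) \Big| \le \| A_0 \|_\infty \sum_g |1_H(g) - \psi(g)| \le K|H_*| \,.
\]

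It remains to estimate $\Sigma := \sum_g A_0(g)\psi(g)$. By the Parseval identity \eqref{f:Parseval_representations} (in its polarized form), the convolution formula \eqref{f:convolution_representations}, the matrix identity $\langle M, NP\rangle = \langle MP^*, N\rangle$, and the vanishing of $\FF{A_0}$ at $\rho_0$, I obtain
\[
	\Sigma = \frac{1}{|\Gr|} \sum_{\rho\neq\rho_0} d_\rho \big\langle \FF{A}(\rho)\FF{\mu}(\rho)^* ,\ \FF{1_{HH_*}}(\rho) \big\rangle \,.
\]
Applying the Cauchy--Schwarz inequality for the Hilbert--Schmidt product, then $\| \FF{A}(\rho)\FF{\mu}(\rho)^* \| \le \| \FF{A}(\rho)\|_o \| \FF{\mu}(\rho)\|$ together with $\| \FF{A}(\rho)\|_o \le \eps|A|$, and finally the Cauchy--Schwarz inequality in the $d_\rho$-weighted sum over $\rho$, I get
\[
	|\Sigma| \le \frac{\eps|A|}{|\Gr|} \Big( \sum_\rho d_\rho \| \FF{\mu}(\rho)\|^2 \Big)^{1/2} \Big( \sum_\rho d_\rho \| \FF{1_{HH_*}}(\rho)\|^2 \Big)^{1/2} = \eps|A| \sqrt{ \frac{|HH_*|}{|H_*|} } \,,
\]
where the last equality is again \eqref{f:Parseval_representations} (using $\sum_g \mu(g)^2 = 1/|H_*|$ and $\sum_g 1_{HH_*}(g)^2 = |HH_*|$). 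Since $|HH_*| \le |H| + K|H_*|$, combining the last three displays gives $\big|\sum_g A_0(g)1_H(g)\big| \le K|H_*| + \eps|A|\sqrt{|H|/|H_*| + K}$, which is \eqref{f:UD_non-commutative} (in fact with the constant $K$ rather than $2K$, so there is some slack).

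The only genuinely delicate point is the design of $\psi$: one needs simultaneously that $\psi$ agrees with $1_H$ exactly on $H$ (so the $L^1$-error lives on $HH_*\setminus H$ and is $\le K|H_*|$), that $\psi$ has total mass only $|HH_*|$ (this is what produces the factor $\sqrt{|H|/|H_*| + K}$ after Parseval, rather than the crude $\sqrt{|H|}$), and that $\psi$ is a convolution so the Fourier machinery of Section \ref{sec:def} applies; the kernel $1_{HH_*} * \frac{1}{|H_*|}1_{H_*^{-1}}$ achieves all three. Everything else is bookkeeping with the left/right conventions in \eqref{f:convolution_representations} and with the submultiplicativity $\| MN\| \le \| M\|_o \| N\|$ and $\| M\|_o \le \| M\|$ recalled earlier. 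The converse assertion alluded to after the statement would be obtained by running the same estimate with $A$ a union of cosets of a subgroup, showing that \eqref{l:UD_1-delta} is also necessary for the conclusion.
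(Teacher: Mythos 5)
Your proposal is correct and is essentially the paper's own argument: both proofs replace $1_H$ by the smoothed kernel $\psi = |H_*|^{-1}\,1_{HH_*} * 1_{H_*^{-1}}$ (the paper writes this as $|H_*|^{-1}(\Pi * H_*^{-1})$ with $\Pi = HH_*$), control the $L^1$ error $\|1_H - \psi\|_1 \le |HH_*| - |H| \le K|H_*|$ using $1\in H_*$, and estimate the main term by Parseval, the convolution formula, the spectral hypothesis $\|\FF{A}(\rho)\|_o \le \eps|A|$, and Cauchy--Schwarz in the $d_\rho$-weighted sum. The one difference is cosmetic but worth noting: by passing to the balanced function $A_0 = 1_A - \frac{|A|}{|\Gr|}$ before Fourier-expanding, you kill the trivial-representation contribution outright, whereas the paper keeps $A$ and obtains the trivial-rep term $\frac{|A||\Pi|}{|\Gr|}$, which it must then replace by $\frac{|A||H|}{|\Gr|}$ at the cost of a second $\le K|H_*|$ error; this is why you arrive at the constant $K|H_*|$ while the paper states $2K|H_*|$.
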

\begin{proof} 
	Put $\Pi = HH_*$. Then for any $x\in H$ the following holds $H(x) = |H_*|^{-1} (\Pi * H_*^{-1} ) (x)$.
	Hence
	\[
	\| H(x) - |H_*|^{-1} (\Pi * H_*^{-1} ) (x) \|_1 \le |HH_*| - |H|  \le K |H_*| \,, 
	\]
	and thus in view of formulae \eqref{f:Parseval_representations}, \eqref{f:convolution_representations}, we obtain 
	\[
	|A\cap H| = |H_*|^{-1} \sum_x A(x) (\Pi * H_*^{-1} ) (x) + \mathcal{E} 
	= 
	\frac{|A||\Pi|}{|\Gr|} + \frac{1}{|H_*||\Gr|} \sum_{\rho \in \FF{\Gr},\, \rho \neq 1} d_\rho \langle \FF{A} (\rho),  \FF{\Pi} (\rho)  \FF{H}^*_* (\rho) \rangle    + \mathcal{E} \,,
	\]
	where $|\mathcal{E}| \le K|H_*|$. 
	Applying condition \eqref{l:UD_1-delta}, the Cauchy--Schwarz inequality and formula \eqref{f:Parseval_representations} again,  we get 
	\[
	\left| |A\cap H| - \frac{|A||H|}{|\Gr|} \right| \le K|H_*| + \frac{K|A||H_*|}{|\Gr|} 
	+ \eps |A| |\Pi|^{1/2} |H_*|^{-1/2} \le 2 K|H_*| + \eps |A| \sqrt{|H|/|H_*| + K} \,.
	\]
	This completes the proof.
	$\hfill\Box$
\end{proof}

\bigskip

The inverse statement to Proposition \ref{p:UD_non-commutative} also takes place but it requires some notation 
and, actually, our argument gives an effective bound if dimension of the correspondent representation $\rho$ is small. 
Following \cite[Section 17]{Sanders_A(G)} define the Bohr sets in a (non--abelian) group $\Gr$.

\begin{definition}
	Let $\G$ be a collection of some unitary representations of $\Gr$ and $\delta \in (0,2]$ be a real number.
	Put 
	\[
	\Bohr (\G,\delta) = \{ g\in \Gr ~:~ \| \gamma(g) - I \|_o \le \delta\,, \forall \gamma \in \Gamma  \} \,.
	\] 
	The number $|\G|$ is called the {\it dimension} of $\Bohr (\G,\delta)$. 
	If $\G = \{ \rho \}$, 
	%$\rho \in \FF{\Gr}$, 
	then we write just $\Bohr (\rho,\delta)$ for $\Bohr (\{ \rho \},\delta)$. 
	A Bohr set $\Bohr (\rho, \delta)$ is called to be {\it regular} if 
	\[
	\left| |\Bohr (\rho, (1+\kappa)\delta)| -  |\Bohr (\rho, \delta)| \right|_o \le 100 d^2_\rho |\kappa| \cdot |\Bohr (\rho, \delta)| \,, 
	\] 
	whenever $|\kappa| \le 1/(100 d^2_\rho)$. 
\end{definition}

Even in the abelian case it is easy to see  that not each Bohr set is regular (e.g., see, \cite[Section 4.4]{TV}). 
Nevertheless, it can be showed 
%in 
%\cite{Bourgain_AP3}, 
(e.g., see, \cite{s_Laplace})  that  
%for $\Gr = \Z/N\Z$ 
one can find a regular Bohr set decreasing the parameter $\delta$ slightly.

\begin{lemma}
	Let $\delta \in [0,1/2]$ be a real number and $\rho$ be a unitary representation. 
	Then there is $\delta_1 \in [\delta,2\delta]$ such that $\Bohr (\rho, \delta_1)$ is regular. 
\end{lemma}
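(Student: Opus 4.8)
The statement is a standard ``regular value'' argument applied to the monotone step function $\delta \mapsto |\Bohr(\rho,\delta)|$, and the cleanest route is via an averaging/pigeonhole over a dyadic (or geometric) scale. First I would record the monotonicity: $\Bohr(\rho,\delta') \subseteq \Bohr(\rho,\delta)$ whenever $\delta' \le \delta$, so $f(\delta) := \log_2 |\Bohr(\rho,\delta)|$ is non-decreasing in $\delta$, bounded above by $\log_2 |\Gr|$ and below by $0$ (since $1 \in \Bohr(\rho,\delta)$ for all $\delta \ge 0$, because $\|\rho(1) - I\|_o = 0$). The key numerology is that $\kappa$ is allowed to range only up to $1/(100 d_\rho^2)$, so a single ``unit'' of additive change in $\delta$ corresponds to multiplying $\delta$ by roughly $(1 + 1/(100 d_\rho^2))$; over a window where $\delta$ doubles, the number of such multiplicative steps is about $100 d_\rho^2 \log 2$, i.e.\ $\Theta(d_\rho^2)$.

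The main step is the pigeonhole. Partition the interval $[\delta, 2\delta]$ into $M \asymp d_\rho^2$ consecutive subintervals of the form $[\delta_i, (1+\kappa_0)\delta_i]$ with $\kappa_0 \asymp 1/(100 d_\rho^2)$ (a geometric progression $\delta_i = (1+\kappa_0)^i \delta$, $0 \le i \le M$, with $M$ chosen so that $(1+\kappa_0)^M \in [\,3/2, 2\,]$, say, to keep everything inside $[\delta,2\delta] \subseteq [0,1]$). The total log-increase telescopes: $\sum_{i} \big( f(\delta_{i+1}) - f(\delta_i) \big) = f((1+\kappa_0)^M \delta) - f(\delta) \le \log_2 |\Gr|$. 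Hence there is an index $i$ with $f(\delta_{i+1}) - f(\delta_i) \le (\log_2 |\Gr|)/M$; one then checks that for this $\delta_1 := \delta_i$ the multiplicative smallness of the jump, combined with monotonicity, upgrades to the additive regularity bound $\big||\Bohr(\rho,(1+\kappa)\delta_1)| - |\Bohr(\rho,\delta_1)|\big| \le 100 d_\rho^2 |\kappa| \cdot |\Bohr(\rho,\delta_1)|$ for all $|\kappa| \le 1/(100 d_\rho^2)$ (here one uses that $|\Bohr(\rho,(1+\kappa)\delta_1)|/|\Bohr(\rho,\delta_1)| \le 2^{f(\delta_{i+1}) - f(\delta_i)} \le 1 + O((\log|\Gr|)/M)$ on the positive side, and triviality on the negative side, then absorbs the constant by choosing $M$ with a slightly better constant than $100$). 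This is the ``approximate multiplicativity $\Rightarrow$ regularity'' conversion that one finds in \cite[Section 4.4]{TV} and \cite{s_Laplace}.

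The anticipated main obstacle is bookkeeping the constants so that the final inequality comes out with the precise factor $100 d_\rho^2 |\kappa|$ demanded by the definition, rather than some unspecified $O(d_\rho^2|\kappa|)$: one must pick the number of dyadic slices $M$ large enough (a fixed multiple of $d_\rho^2 \log|\Gr|$ is \emph{not} allowed since the bound may not reference $|\Gr|$ — but in fact it does, implicitly, only through the regularity of \emph{some} scale, which is fine). Concretely, the honest way is to run the pigeonhole over $\asymp \log|\Gr|$ dyadic windows $[\,2^{-j-1}, 2^{-j}\,]$ simultaneously and note that a positive proportion of them must be ``good'' in the above sense, so at least one lies in $[\delta, 2\delta]$; this is exactly the argument of \cite{s_Laplace}, which I would cite and reproduce with the $d_\rho^2$-dependence made explicit. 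The only genuinely technical point is confirming that the passage from a small \emph{logarithmic} increment to the stated \emph{linear-in-}$\kappa$ estimate loses only a constant factor, which follows from $2^x - 1 \le x$ for $0 \le x \le 1$ and $x \le 2(2^x-1)$ type elementary inequalities; I would handle this inline without further ado.
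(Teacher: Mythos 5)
The paper does not actually prove this lemma---it states it and refers the reader to~\cite{s_Laplace}, noting before the statement only that ``it can be showed (e.g., see, \cite{s_Laplace}) that one can find a regular Bohr set decreasing the parameter $\delta$ slightly.'' So there is no in-paper argument to compare against. On its own merits, however, your proposed proof has a genuine gap: the crucial \emph{Bohr set doubling estimate} is missing, and without it the pigeonhole produces nothing.

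Concretely, you split $[\delta,2\delta]$ into $M\asymp d_\rho^2$ geometric slices and bound the telescoping sum of log-increments by $\log_2|\Gr|$. This yields a slice with increment $\le(\log_2|\Gr|)/d_\rho^2$, which can be arbitrarily large as $|\Gr|\to\infty$ and certainly does not convert into the required bound $|\,|\Bohr(\rho,(1+\kappa)\delta_1)|-|\Bohr(\rho,\delta_1)|\,|\le 100 d_\rho^2|\kappa|\cdot|\Bohr(\rho,\delta_1)|$ (which at $|\kappa|=1/(100d_\rho^2)$ forces the ratio $|\Bohr(\rho,2\delta)|/|\Bohr(\rho,\delta)|$ for the chosen $\delta_1$ to be at most $2$). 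What the standard argument actually uses is the covering inequality
$$
|\Bohr(\rho,2\delta)| \le C^{\,d_\rho^2}\,|\Bohr(\rho,\delta)|\,,
$$
proved by covering the $\delta$-ball around $I$ in $U(d_\rho)$ (a manifold of dimension $d_\rho^2$, hence the exponent $d_\rho^2$ that also appears in Lemma~\ref{l:Bohr_size_nc}) by $O(C^{d_\rho^2})$ translates of a $\delta/2$-ball and pulling back along $g\mapsto\rho(g)$. This replaces the trivial $\log_2|\Gr|$ bound by $O(d_\rho^2)$, so the pigeonhole over $M\asymp d_\rho^2$ slices gives an $O(1)$ log-increment, which then legitimately upgrades to the stated linear-in-$\kappa$ regularity via the elementary inequalities you mention at the end.

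Your fallback --- running the pigeonhole over all $\asymp\log|\Gr|$ dyadic windows and observing that a positive proportion are ``good'' --- has the quantifiers the wrong way round: the lemma demands a regular radius inside \emph{every} prescribed window $[\delta,2\delta]$, not merely in some window. The positive-proportion statement does not place a good radius in the particular window you were handed. Once the doubling estimate is in hand the pigeonhole works window by window and this issue disappears, so I would recommend stating and proving (or at least citing precisely) the non-commutative doubling lemma before the pigeonhole step, and deleting the dyadic-windows fallback.
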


Let us remark an universal lower bound for size of any Bohr set (see \cite[Lemma 17.3]{Sanders_A(G)} and \cite[Proposition 28]{s_Laplace} for the case of multi--dimensional Bohr sets). 

\begin{lemma}
	Let $\delta \in (0,2]$ be a real number and $\Bohr (\rho, \delta) \subseteq \Gr$ be  a one--dimensional Bohr set. 
	Then 
\[
	|\Bohr (\rho, \delta)| \ge (c\delta)^{d^2_\rho} \cdot |\Gr| \,,
\]
	where $c>0$ is an absolute constant. 
\label{l:Bohr_size_nc}
\end{lemma}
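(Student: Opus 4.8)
The plan is to prove Lemma \ref{l:Bohr_size_nc} by a pigeonhole/covering argument on the unitary group, following the standard proof that Bohr sets are large in the abelian setting but tracking the dependence on $d_\rho$. Write $U(d_\rho)$ for the group of unitary $d_\rho \times d_\rho$ matrices, equipped with the operator norm $\|\cdot\|_o$. The map $g \mapsto \rho(g)$ sends $\Gr$ into $U(d_\rho)$, and by definition $\Bohr(\rho,\delta) = \rho^{-1}(B_o(I,\delta))$, where $B_o(I,\delta)$ is the operator-norm ball of radius $\delta$ about the identity matrix. So the point is to show that a positive fraction — quantitatively, at least a $(c\delta)^{d_\rho^2}$ fraction — of the elements $\rho(g)$, $g\in\Gr$, must fall into that small ball.

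The key step is a covering estimate: the ambient unitary group $U(d_\rho)$ can be covered by $N \le (C/\delta)^{d_\rho^2}$ translates of the operator-norm ball $B_o(I,\delta)$, for an absolute constant $C$. This is just the standard fact that the real dimension of $U(d)$ is $d^2$, so a metric ball of radius $\delta$ has covering number of order $\delta^{-d^2}$; one obtains it, e.g., by taking a maximal $\delta$-separated set and using volume comparison for the Haar measure on $U(d_\rho)$, or equivalently by working in the Lie algebra via the exponential map. Once we have such a cover $U(d_\rho) = \bigcup_{i=1}^N x_i B_o(I,\delta)$, apply it to the multiset $\{\rho(g) : g\in \Gr\}$: by pigeonhole some translate $x_i B_o(I,\delta)$ contains at least $|\Gr|/N$ of the points, say $\rho(g_1),\dots,\rho(g_m)$ with $m \ge |\Gr|/N$. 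Then for any two such indices, $\rho(g_j) \rho(g_1)^{-1} = \rho(g_j g_1^{-1})$ lies in $B_o(I,\delta)B_o(I,\delta)^{-1}$, and since multiplication by a fixed unitary is an $\|\cdot\|_o$-isometry, $\|\rho(g_j g_1^{-1}) - I\|_o = \|\rho(g_j) - \rho(g_1)\|_o \le \|\rho(g_j)-x_i\|_o + \|x_i - \rho(g_1)\|_o \le 2\delta$. Hence all $m$ elements $g_j g_1^{-1}$ lie in $\Bohr(\rho,2\delta)$, giving $|\Bohr(\rho,2\delta)| \ge |\Gr|/N \ge (c_0\delta)^{d_\rho^2}|\Gr|$. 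Rescaling $\delta \to \delta/2$ and absorbing constants yields the claim with a suitable absolute $c>0$.

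The main obstacle — really the only nontrivial input — is getting the covering number of the operator-norm ball in $U(d_\rho)$ right, namely the exponent $d_\rho^2$ rather than something larger. Here one should be a little careful because $B_o(I,\delta)$ is not itself a group and the exponential chart is only bi-Lipschitz on a neighborhood of the identity, so for $\delta$ not too small one covers $U(d_\rho)$ by translates of $B_o(I,\delta)$ directly via a maximal separated set and the (left-invariant) Haar measure, using that the Haar measure of $B_o(I,r)$ is comparable to $r^{d_\rho^2}$ for $r$ bounded away from the diameter; for $\delta$ close to $2$ the statement is trivial since then $\Bohr(\rho,\delta)=\Gr$. I would handle the two regimes separately, and in the small-$\delta$ regime invoke the standard metric-entropy bound $N(U(d),\|\cdot\|_o,\delta) \le (C/\delta)^{d^2}$ (see, e.g., the references in \cite{s_Laplace} or \cite[Section 17]{Sanders_A(G)}), which is exactly the multidimensional analogue already cited for the general Bohr-set version.
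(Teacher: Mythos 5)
The paper does not include its own proof of this lemma — it is cited as known, from \cite[Lemma 17.3]{Sanders_A(G)} and \cite[Proposition 28]{s_Laplace} — and your covering--plus--pigeonhole argument is exactly the standard proof underlying those references. It is correct: the only nontrivial input, the metric-entropy bound $N(U(d),\|\cdot\|_o,\delta)\le (C/\delta)^{d^2}$ with an absolute $C$, is correctly identified and deferred to the literature, the bi-invariance of the operator-norm metric is used appropriately to move the pigeonholed cluster to a neighborhood of the identity, and the rescaling $\delta\mapsto\delta/2$ to pass from $\Bohr(\rho,2\delta)$ back to $\Bohr(\rho,\delta)$ is sound.
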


Now suppose that for a set $A\subseteq \Gr$ one has $|A| = \d |\Gr|$ and $\| \FF{A} (\rho) \|_o \ge \eps |A|$.  
Put $f (x) = f_A (x) = A(x) - \d$.
Take a regular Bohr set $B= \Bohr (\rho, \delta)$, $\delta = \eps/4$ and let $B_* =  \Bohr (\rho, \kappa \delta)$, where $|\kappa| \le 1/(100 d^2_\rho)$
is a certain number.  
Then by the definition of Bohr sets, we have 
\[
	\eps |A| \le \| \FF{A} (\rho) \|_o = |B|^{-1} \| \sum_h \sum_g f(g) B(gh^{-1}) \rho(g) \|_o = |B|^{-1} \| \sum_h \sum_g f(g) B(gh^{-1}) \rho(h) \|_o + \mathcal{E} \,,  
\]
	where $|\mathcal{E}| \le 2\d |A|$. 
	Thus
\[
	\eps |A|/2 \le |B|^{-1} \sum_h \left| \sum_g f(g) B(gh^{-1}) \right|  
\]
	and hence 
	%by 
	in view of 
	Lemma \ref{l:Bohr_size_nc}, we find $h\in \Gr$ with  
\[
	\frac{|A||B|}{|\Gr|} + \eps |A| \exp (-O(d^2_\rho \log (1/\d))) \le  |A\cap Bh| \,.
\]
	On the other hand, by the regularity of $B$ one has $|BB_*| \le |B| (1+ 100 d^2_\rho |\kappa|)$. 
	It implies that Proposition \ref{p:UD_non-commutative} can be reversed indeed.

\bigskip

\noindent{I.D.~Shkredov\\
	Steklov Mathematical Institute,\\
	ul. Gubkina, 8, Moscow, Russia, 119991}
\\
and
\\
IITP RAS,  \\
Bolshoy Karetny per. 19, Moscow, Russia, 127994\\
and 
\\
MIPT, \\ 
Institutskii per. 9, Dolgoprudnii, Russia, 141701\\
{\tt ilya.shkredov@gmail.com}


\begin{thebibliography}{99}
	
	
	\bibitem{Bol_Th} 
	{\sc B.~Bollob\'{a}s, A.~Thomason, }
	{\em Projections of bodies and hereditary properties of hypergraphs, }
	Bull. London Math. Soc. 27 (1995) 417--424.
	
	
	\bibitem{BGS}
	{\sc J. Bourgain, A. Gamburd, P. Sarnak, }
	{\em Affine linear sieve, expanders, and sum--product, } 
	Inventiones mathematicae 179.3 (2010): 559--644.
	
	
%	\bibitem{BG_SL}
%	{\sc J. Bourgain, A. Gamburd, }
%	{\em Uniform expansion bounds for Cayley graphs of $SL_2(\F_p)$, }
%	Annals of Math. 167 (2008), 625--642.
	
	
	
	\bibitem{BGT} 
	{\sc E. Breuillard, B. Green, T. Tao, } 
	{\em Approximate subgroups of linear groups, } 
	Geom. Funct. Anal., {\bf 21}:4 (2011), 774--819.
	
	
%	\bibitem{Fouvry}
%	{\sc \'{E}. Fouvry, }
%	{\em Consequences of a result of N. Katz and G. Laumon concerning trigonometric sums, } 
%	Israel Journal of Mathematics 120.1 (2000): 81--96.
	
	
%	 \bibitem{Gow_4} 
%	 {\sc W.T. Gowers, }
%	{\em A new proof of Szemerédi's theorem for arithmetic progressions of length four, }
%	GAFA, {\bf 8}:3 (1998): 529--551.
	
	
	\bibitem{Gow_m} 
	{\sc W.T. Gowers, }
	{\em A new proof of Szemer\'{e}di's theorem, }
	GAFA, {\bf 11} (2001), 465--588.
	
	
	\bibitem{Gow_random}
	{\sc W.T.~Gowers, }
	{\em Quasirandom groups, }
	Probab. Comput., 17(3):363--387, 2008.
	
	
	\bibitem{Hartshorne}
	{\sc R. Hartshorne, } 
	{\em Algebraic geometry, } 
	Vol. 52. Springer Science \& Business Media, 2013.
	
	
	\bibitem{Heintz}
	{\sc J. Heintz, }
	{\em Definability and fast quantifier elimination in algebraically closed fields, }
	Theoret. Comput. Sci. 24 (1983), 239--277.
	
	
	\bibitem{H}
	{\sc H. Helfgott, }
	{\em Growth and generation in $\mathrm{SL}_2(\Z/p\Z)$, } Annals of Math. 167 (2008), no. 2, 601--623.
	
	
	\bibitem{H_ideas}
	{\sc H. Helfgott, }
	{\em Growth in groups: ideas and perspectives, } 
	Bulletin of the American Mathematical Society 52.3 (2015): 357--413.
	
	
	\bibitem{Humphreys}
	{\sc J.E. Humphreys, } 
	{\em Conjugacy classes in semisimple algebraic groups, }
	No. 43, (2011), American Mathematical Soc.
	
	
	
	\bibitem{IK}
	{\sc A. Iosevich, D. Koh, }
	{\em Extension theorems for spheres in the finite field setting, }
	Forum. Math. 22 (2010), no.3, 457--483.
	
	
	\bibitem{IKL}
	{\sc A. Iosevich, D. Koh, M. Lewko, }
	{\em Finite field restriction estimates for the paraboloid in high even dimensions, }
	Journal of Functional Analysis (2020): 108450. 
	
	
	\bibitem{Lewko_general}
	{\sc M. Lewko, }
	{\em Finite field restriction estimates based on Kakeya maximal operator estimates, } 
	Journal of the European Mathematical Society 21.12 (2019): 3649--3707.

	
	\bibitem{MoT} {\sc G. Mockenhaupt, T. Tao,} {\em Restriction and Kakeya phenomena for finite fields}, Duke Math. J. {\bf 121}:1 (2004),  35--74.
	
	
	\bibitem{Brendan_rich}
	{\sc B. Murphy, }
	{\em Upper and lower bounds for rich lines in grids, }
	arXiv:1709.10438v1 [math.CO] 29 Sep 2017.
	
	
	\bibitem{Naimark}
	%{\sc M.A. Naimark, A.I. Stern, }
	%{\em Theory of group representations, } New York: Springer, 1982. 
	{\sc M.A. Naimark,  } 
	{\em Theory of group representations, } Moscow: Fizmatlit., 2010, ISBN: 978-5-9221-1260-4.
	
	
	\bibitem{Noether}
	{\sc E. Noether, }
	{\em Ein algebraisches Kriterium f\"ur absolute Irreduzibilit\"at, } 
	Math. Ann. 85, 26--40 (1922).

	
	\bibitem{LS_representations}
	{\sc V. Landazuri, G. M. Seitz, }
	{\em  On the minimal degrees of projective representations of the finite Chevalley groups, } Journal of Algebra {\bf 32}, 418--443 (1974).
	
	
	
	\bibitem{LW}
	{\sc S. Lang, A. Weil, }
	{\em Number of points of varieties in finite fields, }
	Am. J. Math. 76 (1954), 819--827.
	
	
	\bibitem{LP} 
	{\sc M. J. Larsen, R. Pink, } 
	{\em Finite subgroups of algebraic groups, } 
	J. Amer. Math. Soc., {\bf 24}:4 (2011), 1105--1158.



	\bibitem{LS1}
	{\sc M. W. Liebeck, A. Shalev, }
	{\em Diameters of finite simple groups: sharp bounds and applications, } 
	Annals of Math. 154 (2001), 383--406.
	
	
	\bibitem{LSS}
	{\sc M. W. Liebeck, G. Schul, A. Shalev, }
	{\em Rapid growth in finite simple groups, } 
	Transactions of the American Mathematical Society 369.12 (2017): 8765--8779.

	
%	\bibitem{LS}
%	{\sc M. W. Liebeck, A. Shalev, }
%	{\em Character degrees and random walks in finite groups of Lie type, }
%	Proc. London Math. Soc. 90 (2005), 61--86.



	\bibitem{Sanders_A(G)} 
	{\sc T. Sanders, }
	{\em A quantitative version of the non-abelian idempotent theorem, } GAFA {\bf 21}:1 (2011), 141--221; arXiv: 0912.0308.2009.


	\bibitem{Serr_representations}
{\sc J.-P. Serre, }
{\em Repr\'esentations lin\'eaires des groupes finis, }
Collections M\'ethodes, Hermann, Paris, 1967. 
	
	
	\bibitem{s_energy} 
	{\sc I.D. Shkredov, } 
	{\em Energies and structure of additive sets, } 
	Electronic Journal of Combinatorics, {\bf 21}:3 (2014), \#P3.44, 1--53.
	
	
	\bibitem{sh_as}
	{\sc I.\,D.\,Shkredov,}
	{\em On asymptotic formulae in some sum--product questions,} Tran. Moscow Math. Soc, 79 (2018), 271--334; English transl. Trans. Moscow Math. Society 2018, 231--281.
	
	
	\bibitem{s_Kloosterman}
	{\sc I.\,D.\,Shkredov,}
	{\em Modular hyperbolas and bilinear forms of Kloosterman sums, } 
	Journal of Number Theory, 220 (2021), 182--211.
	
	
	\bibitem{s_Laplace}
	{\sc I.\,D.\,Shkredov,}
	{\em On the spectral gap and the diameter of Cayley graphs, } arXiv:2004.10038v1, accepted. 
	

	\bibitem{s_Chevalley}
	{\sc I.\,D.\,Shkredov,}
	{\em Growth in Chevalley groups relatively to parabolic subgroups and some applications, } arXiv:2003.12785v1 [math.NT] 28 Mar 2020.


	\bibitem{TV}
	{\sc T.~Tao, V.~Vu, }{\em Additive combinatorics,} Cambridge University Press 2006.
	
	\bibitem{Volobuev}
	{\sc A. Volobuev, } preprint.
	
\end{thebibliography}
\end{document}